\documentclass[10pt]{amsart}
\usepackage{amssymb,amsmath,amsthm,amsfonts,amsopn,url}
\usepackage{amscd,amssymb,amsopn,amsmath,amsthm,graphics,amsfonts,enumerate,verbatim,calc}
\usepackage[mathcal]{euscript}
\usepackage[all]{xy}
\theoremstyle{plain}
\newtheorem{thm}{Theorem}[section]
\newtheorem*{mt*}{Main Theorem}

\newtheorem{prop}[thm]{Proposition}
\newtheorem{lemma}[thm]{Lemma}
\newtheorem{cor}{Corollary}
\newtheorem{rem}{Remark}
\newtheorem{example}{Example}
\theoremstyle{definition}

\newcommand{\ideal}[1]{\mathfrak{#1}}
\newcommand{\m}{\ideal{m}}
\newcommand{\n}{\ideal{n}}
\newcommand{\p}{\ideal{p}}
\newcommand{\q}{\ideal{q}}

\newcommand{\func}[1]{\mathrm{#1} \,}

\newcommand{\Tor}{\func{Tor}}
\newcommand{\Ext}{\func{Ext}}
\newcommand{\Ass}{\func{Ass}}
\newcommand{\V}{\func{V}}
\newcommand{\T}{\func{T}}
\newcommand{\Z}{\func{Z}}
\newcommand{\Supp}{\func{Supp}}
\newcommand{\Spec}{\func{Spec}}

\newcommand{\ZZ}{{\mathbb Z}}
\newcommand{\CC}{{\mathbb C}}

%\title[]{Behaviour of Finiteness Property of Associated Primes under Pure and Cyclically Pure Base Change}

\title[]{Behaviour of Finiteness of the Set of Associated Primes under Ring Extensions}

\author[]{Rajsekhar Bhattacharyya}

\address{Dinabandhu Andrews College, Garia, Kolkata 700084, India}

\email{rbhattacharyya@gmail.com}

\thanks{}

\keywords{Local Cohomology}

\subjclass[2010]{13D45}
\begin{document}

\begin{abstract}
We study the behaviour of the finiteness of the set of associated primes of local cohomology modules, more generally of Lyubeznik functors, under various ring extensions. At first, we review the results for flat and faithfully flat extensions and we present new applications of them. Then, we focus how the finiteness property of the set of associated primes of local cohomology modules and Lyubeznik functors is transferred from extended ring to the base ring of pure and cyclically pure ring extensions. We show that finiteness property can be transferred from a ring to its pure local subring and this extends the result of Theorem 1.1 of \cite{Nu}. Further, we observed that under mild conditions on the rings, finiteness property comes down from cyclically pure ring extensions to its local base ring. In particular, we observe that the set of associated primes of Lyubeznik functors of a cyclically pure local subring (which turns out to be Cohen-Macaulay) of equicharacteristic or unramified regular local ring, is finite. There is an appendix on behaviour of the Bass numbers under pure and cyclically pure ring extensions.
\end{abstract}

\maketitle
\section{introduction}
Let $R$ be a Noetherian ring and $M$ be a module over it. For an ideal $I\subset R$ and for some integer $i\geq 0$, consider local cohomology module $H^i_I (M)$ with support in the ideal $I$. In the fourth problem of \cite{Hu}, it is asked that whether local cohomology modules of Noetherian rings have finitely many associated prime ideals. There are examples given in \cite{Si}, \cite{Ka}, and \cite{SS}, which show that the set of associated primes of $i$-th local cohomology module $H^i_I (R)$ of a Noetherian ring $R$ can be infinite. However, there are several important cases where we have the finiteness of the set of associated primes of local cohomology modules and we list them below: (1) Regular rings of prime characteristic \cite{HS}, (2) regular local and affine rings of characteristic zero \cite{Ly1}, (3) unramified regular local rings of mixed characteristic \cite{Ly2} and (4) smooth algebra over $\ZZ$ \cite{BBLSZ}. These results support Lyubeznik conjecture, (\cite{Ly1}, Remark 3.7): 

\textbf{Conjecture:} Let $R$ be a regular ring and $I\subset R$ be its ideal, then for each $i\geq 0$, $i$-th local cohomology module $H^i_I (R)$ has finitely many associated prime ideals.

It is well known that for any Noetherian ring $R$ of dimension $d$ and for any $R$-module $M$, $\Ass_R H^d_I (M)$ is always finite, see \cite{Mar}. Moreover, if the ring becomes local then for any $R$-module $M$, we also have the finiteness of $\Ass_R H^{d-1}_I (M)$, see Corollary 2.4 of \cite{Mar}. Recently, in \cite{Pu2}, it is shown that for an excellent regular ring $R$ of dimension $d$ containing a field of characteristic zero, $\Ass_R H^{d-1}_I (R)$ is finite, for $i\geq 0$ and for every ideal $I\subset R$. 

The conjecture is open for the ramified regular local ring, in fact, more specifically, it is open only for the primes that contain $p$ (see \cite{Nu3}).

In this paper, we study how the finiteness of the set of associated primes of local cohomology modules transfers from extended ring to the base ring and vice versa, under various ring extensions. More generally, whenever it is possible, we extend our study to Lyubeznik functor, which is introduced by Lyubeznik in \cite{Ly1}. Here, we account for a brief description of it: Let $\Z$ be a closed subset of $\Spec R$ and $M$ be an $R$-module. We set $H^i_{\Z} (M)$ as the $i$-th local cohomology module of $M$ with support in $\Z$. We notice that $H^i_{\Z} (M)=H^i_I (M)$, for $\Z=\V(I)=\{P\in\Spec R: I\subset P\}$. For any two closed subsets of $\Spec R$, $\Z_1\subset \Z_2$, there is a long exact sequence of functors
$$\ldots\to H^i_{\Z_1}\to H^i_{\Z_2}\to H^i_{\Z_1/\Z_2}\to \ldots$$
We set $\T =\T_1\circ \dots\circ\T_t$, where every functor $\T_j$ is either $H^i_{\Z}$ for some closed subset $\Z$ of $\Spec R$ or the kernel or image (or cokernel) of some map in the above long exact sequence. $\T$ is known as Lyubeznik functor.

In section 2, we review the situations when the ring extensions are flat and faithfully flat. In section 3, we observe applications of these cases: Firstly, in Theorem 3.2 we show that for any smooth algebra over Noetherian semi-local domain of characteristic zero, where each of its maximal ideal is generated by a prime in $\ZZ$, each local cohomology module has finitely many associated prime ideals. This generalizes the result of the part of the Theorem 4.1 of \cite{BBLSZ}. Secondly, in Theorem 3.3, we study how the finiteness property can come down from localized ring to base ring. In \cite{Nu2}, finiteness of the set of associated primes of local cohomology for polynomial ring or for power series ring over one dimensional Noetherian ring is studied. Lastly, in Theorem 3.4, we extend the part of the result of Theorem 1.2 of \cite{Nu2}, to polynomial and power series ring over two dimensional Noetherian local ring. 

`Cyclically pure' condition on a ring extension is the weakest condition among the conditions namely, `faithfully flat', `split' and `pure'. We have `faithfully flat' $\Rightarrow$ `pure', `split' $\Rightarrow$ `pure' where `pure' $\Rightarrow$ `cyclically pure'. 
In section 4, we observe the behaviour of finiteness of the set of associated primes of local cohomology modules as well as Lyubeznik functor under pure and cyclically pure ring extensions. In Theorem 1.1 of \cite{Nu}, it is proved that finiteness property of the set of associated primes of Lyubeznik functor $\T$ can be transferred from the ring to its subring which is a direct summand of it. In Theorem 4.1, we show that finiteness of the set of associated primes of $\T$ can be transferred from the ring to its pure local subring and this extends the result of Theorem 1.1 of \cite{Nu}. In Lemma 4.2, we show that after a base change by a complete ring, cyclically pure extension remains cyclically pure. Using Lemma 4.2 and under mild conditions on the rings, we find that finiteness of the set of associated primes of Lyubeznik functor $\T$ comes down from extension ring to its cyclically pure subring. We state this result in Theorem 4.3. As a consequence, we observe that the set of associated primes of $\T$ of a cyclically pure subring (which turns out to be a Cohen-Macaulay ring) of equicharacteristic or unramified regular local ring, is finite, see Corollary 7 and Remark 6. Lastly, there is an appendix on behaviour of the Bass numbers under pure and cyclically pure ring extensions. 

Throughout the papers all the rings are commutative Noetherian ring with unity. For basic results and for unexplained terms, we refer \cite{BH}, \cite{CAofM} and \cite{CRTofM}.
%obseve that, a class of Cohen-Macaulay local rings, which are cyclically pure subrings of regular rings enjoy the finiteness of associated primes of local cohomology modules, see Corollary 7 and Remark 5. In particular, we observe that the set of associated primes of local cohomology modules of cyclically pure subrings of equicharacteristic or unramified regular local ring, is finite.

%%%%%%%%%%%%%%%%%%%%%%%%%%%%%%%%%%%%%%%%%%%%%%%%%%%%%%%%%%%%%%%%%%%%%%%%%%%%%%%%%%%%%%%%%%%%%%%%%%%%%%%%%%%%%%%%%%%%%%%%%%%%%%%%%%
%%%%%%%%%%%%%%%%%%%%%%%%%%%%%%%%%%%%%%%%%%%%%%%%%%%%%%%%%%%%%%%%%%%%%%%%%%%%%%%%%%%%%%%%%%%%%%%%%%%%%%%%%%%%%%%%%%%%%%%%%%%%%%%%%
%%%%%%%%%%%%%%%%%%%%%%%%%%%%%%%%%%%%%%%%%%%%%%%%%%%%%%%%%%%%%%%%%%%%%%%%%%%%%%%%%%%%%%%%%%%%%%%%%%%%%%%%%%%%%%%%%%%%%%%%%%%%%%%
\section{a review on basic results}

In this section, we review some of the known results regarding the behaviour of finiteness of the set of associated primes of local cohomology modules under flat and faithfully flat ring extensions. %The case of `Faithfully flat' extension is stated in the corollary below. We also provide proofs of all the results for completeness.

For flat extension of Noetherian rings, finiteness property of set of associated primes can be transferred from base ring to extended ring, but only under special situation we can do the reverse. As for example, for faithfully flat extension we can always do the reverse. The results of flat and faithfully flat extensions are given in (1) of Proposition 2.1 and in Corollary 1. More specifically, in (2) of Proposition 2.1, we get the result for the reverse in the flat extension and from which Corollary 1 follows. The results of (2) of Proposition 2.1 extend (4) of Theorem 3.3.1 of \cite{Gl}, from localization to an arbitrary flat extension.  

\begin{prop}
Suppose $R\rightarrow S$ be a ring homomorphism of Noetherian rings. Let $S$ be flat over $R$.\newline 
(1) For $R$-module $E$, if $\Ass_R E$ is finite then $\Ass_S (E\otimes_R S)$ is also finite.\newline 
For an $R$-module $M$, for every ideal $I\subset R$ and for $i\geq 0$, $\Ass_{R} H^i_I(R)$ is a finite set implies that $\Ass_{S} H^i_{IS}(S)$ is a finite set.\newline
More generally, for Lyubeznik functor $\T$, if $\Ass_{R} \T(R)$ is a finite set then $\Ass_{S} \T(S)$ is a also finite set.\newline
 
(2) Converse will be true, if in addition to we assume that only for finitely many primes $\p\in \Ass_{R} E$, $\p S$ is not a proper ideal of $S$. For the local cohomology modules and for Lyubeznik functors, converse will be true if in addition to we assume that only for finitely many primes $\p\in \Ass_{R} H^i_I(R) $ and $\p\in \Ass_{R} \T(R)$, $\p S$ is not a proper ideal of $S$.
\end{prop}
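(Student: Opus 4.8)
The plan is to reduce everything to the flat base change formula for associated primes: for a flat homomorphism $R\rightarrow S$ of Noetherian rings and an arbitrary $R$-module $E$,
\begin{equation*}
\Ass_S(E\otimes_R S)=\bigcup_{\p\in\Ass_R E}\Ass_S(S/\p S), \tag{$*$}
\end{equation*}
see \cite{CRTofM}. For part (1), I would first note that since $S$ is Noetherian each $S/\p S$ is a finitely generated $S$-module, so $\Ass_S(S/\p S)$ is finite. Hence, if $\Ass_R E$ is finite, the right-hand side of $(*)$ is a finite union of finite sets, and $\Ass_S(E\otimes_R S)$ is finite. The point that makes this applicable to local cohomology is that $(*)$ holds for any $R$-module $E$, so the fact that local cohomology modules need not be finitely generated is irrelevant.

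To transfer the conclusion to local cohomology and to Lyubeznik functors I would invoke flat base change, namely $H^i_I(R)\otimes_R S\cong H^i_{IS}(S)$ and, more generally, $\T(R)\otimes_R S\cong\T(S)$, where on the right $\T$ denotes the Lyubeznik functor on $\Spec S$ obtained by pulling back along $\Spec S\rightarrow\Spec R$ the closed subsets defining $\T$. The latter isomorphism holds because $-\otimes_R S$ is exact and therefore commutes both with the local cohomology functors (flat base change) and with the kernel, image, and cokernel operations from which $\T$ is built. Applying the module statement with $E=H^i_I(R)$ and with $E=\T(R)$ then yields the two remaining assertions of part (1).

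For the converse in part (2) the key is a contraction lemma: for each $\p\in\Spec R$, every $Q\in\Ass_S(S/\p S)$ satisfies $Q\cap R=\p$. Here $\p\subseteq Q\cap R$ is immediate, and the reverse inclusion follows because $S/\p S=(R/\p)\otimes_R S$ is flat, hence torsion-free, over the domain $R/\p$, so no element of $R\setminus\p$ annihilates a nonzero element of $S/\p S$. Granting this, assume $\Ass_S(E\otimes_R S)$ is finite. For each $\p\in\Ass_R E$ with $\p S\neq S$, the module $S/\p S$ is a nonzero finitely generated module over the Noetherian ring $S$, so $\Ass_S(S/\p S)\neq\emptyset$; choosing some $Q_\p$ in it, the contraction lemma makes $\p\mapsto Q_\p$ an injection of $\{\p\in\Ass_R E:\p S\neq S\}$ into the finite set $\Ass_S(E\otimes_R S)$. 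Thus this subset is finite, and adding the finitely many $\p$ with $\p S=S$ (finite by hypothesis) shows $\Ass_R E$ is finite. The local cohomology and Lyubeznik statements follow by taking $E=H^i_I(R)$ and $E=\T(R)$ and using the base change isomorphisms above.

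The main obstacle I expect is making the part (2) argument fully rigorous: verifying the contraction lemma and checking that the extra hypothesis is precisely what absorbs the primes that ``disappear'' ($\p S=S$) when $S$ is flat but not faithfully flat. A secondary technical point is confirming that the isomorphism $\T(R)\otimes_R S\cong\T(S)$ genuinely respects the correspondence of closed subsets, so that $(*)$ may be applied verbatim with $E=\T(R)$.
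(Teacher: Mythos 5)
Your proposal is correct and is essentially the argument the paper has in mind: the paper itself gives no written proof of this proposition (Remark 1 merely declares assertions (1) and (2) ``well known'' and reduces the Lyubeznik functor case to the identity $\T(S)=\T(R)\otimes_R S$ from Lemma 3.1 of Lyubeznik, exactly as you do), and the standard proof it is invoking is the flat base change formula $\Ass_S(E\otimes_R S)=\bigcup_{\p\in\Ass_R E}\Ass_S(S/\p S)$ together with the contraction fact $Q\cap R=\p$ for $Q\in\Ass_S(S/\p S)$, both of which you state and justify correctly (including the point that the formula holds for arbitrary, not necessarily finitely generated, $E$). No gaps.
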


\begin{cor}
Let $S$ be faithfully flat over $R$.\newline 
(1) For $R$-module $E$, $\Ass_{R} E$ is finite if and only if $\Ass_{S}(E\otimes_{R} S)$ is finite.\newline
(2) In particular, for the local cohomology modules of an arbitrary $R$-module $M$, for every ideal $I\subset R$ and for $i\geq 0$, $\Ass_{S} H^i_{IS}(M\otimes_R S)$ is a finite set if and only if $\Ass_{R} H^i_I(M)$ is a finite set.\newline
(3) More generally, for Lyubeznik functor $\T$, $\Ass_{S} \T(S)$ is a finite set if and only if $\Ass_{R} \T(R)$ is a finite set.\newline
\end{cor}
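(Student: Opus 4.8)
The plan is to obtain the corollary as a direct consequence of Proposition 2.1, the only extra ingredient being that faithful flatness forces $\p S$ to be a proper ideal of $S$ for \emph{every} prime $\p \subset R$. Each of the ``only if'' directions is already supplied by part (1) of the proposition: applying it with $E$ an arbitrary $R$-module gives part (1), and specializing to $E = H^i_I(M)$ and $E = \T(R)$ (together with the flat base change isomorphisms below) gives the forward implications in parts (2) and (3).

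For the ``if'' directions I would invoke part (2) of Proposition 2.1. Its hypothesis asks that only finitely many primes $\p \in \Ass_R E$ satisfy the condition that $\p S$ is not proper; I would show this exceptional set is in fact \emph{empty}. Since $S$ is faithfully flat over $R$, the structure map induces a surjection $\Spec S \to \Spec R$. Hence each prime $\p \subset R$ is the contraction of some $\q \in \Spec S$, so that $\p S \subseteq \q \subsetneq S$ and $\p S$ is proper. The exceptional set being empty (hence finite), the hypothesis of Proposition 2.1 (2) holds, yielding the reverse implication in part (1).

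To descend from part (1) to parts (2) and (3), I would identify the module $E \otimes_R S$ using the flat base change isomorphisms $H^i_I(M) \otimes_R S \cong H^i_{IS}(M \otimes_R S)$ and $\T(R) \otimes_R S \cong \T(S)$, both valid because $S$ is flat over $R$. Taking $E = H^i_I(M)$ and rewriting $E \otimes_R S$ accordingly converts the biconditional of part (1) into part (2); taking $E = \T(R)$ similarly gives part (3).

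Since Proposition 2.1 carries the analytic weight, there is no genuine obstacle to overcome. The single point demanding care is the emptiness of the exceptional set in part (2) of the proposition, which reduces to the surjectivity of $\Spec S \to \Spec R$ --- a standard feature of faithfully flat maps. Everything else is bookkeeping with the base change isomorphisms.
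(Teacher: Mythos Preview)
Your proposal is correct and follows essentially the same route as the paper. The paper's own justification (in Remark~1) is terser: it declares parts (1) and (2) well known and deduces part (3) from the base change isomorphism $\T(S)=\T(R)\otimes_R S$ together with part (1); your argument simply makes explicit the one substantive point---that faithful flatness forces $\p S\subsetneq S$ for every prime $\p$, so the exceptional set in Proposition~2.1(2) is empty---and then applies the same base change isomorphisms.
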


\begin{rem}
Both, for Proposition 2.1 and Corollary 1, assertions (1) and (2) are well known. For (3), we observe that $\T(S)= \T(R)\otimes_R S$ (see Lemma 3.1 of \cite{Ly1}), and result follows from first assertion. %Third assertion follows from the second, since $\T$ commutes with flat extension and by induction on $t$ for $\T =\T_1\circ \dots\circ\T_t$, we conclude. 
\end{rem}

For local cohomology modules we have the following lemma. The results are well known result, but for the sake of completeness we present the proof.

\begin{lemma}
Consider an $R$-module $M$ and let $I$ be an ideal of Noetherian ring $R$. For $i\geq 0$, let $H^i_{I}(M)$ be its $i$-th local cohomology module. Then, $\Ass_{R}H^i_{I}(M)\subset \Supp_{R}H^i_{I}(M)\subset V(I)$. 
\end{lemma}

\begin{proof}
Since $\Ass_{R}H^i_{I}(M)\subset \Supp_{R}H^i_{I}(M)$, it is sufficient to show $\Supp_{R}H^i_{I}(M)\subset V(I)$. For $\p\in \Spec R$, assume that $(H^i_I (M))_p\neq 0$, then $\p$ should contain $I$, otherwise, we get some element $x\in I$ outside $\p$ which is unit in $R_{\p}$. Since, every element of $H^i_I (M)$ is annihilated by some power of $I$, every element of $(H^i_I (M))_p$ is annihilated by some power of $x$. This implies that $(H^i_I (M))_p = 0$. Thus $\Ass_{R}H^i_{I}(M)\subset \Supp_{R}H^i_{I}(M)\subset V(I)$. 
\end{proof}

We focus on the results of special situation of Proposition 2.1, when $S= W^{-1}R$ for a multiplicatively closed set $W$ of $R$ and we summarise all these well-known results in the following proposition.  

\begin{prop}
Let $R$ be a Noetherian ring, $\p,\q\in\Spec R$ and $W$ be a multiplicatively closed set. Then for an arbitrary $R$-module $M$, we have the following:\newline
(1) For every $i\geq 0$ and for every ideal $I\subset R$, $\p W^{-1}R\in \Ass_{W^{-1}R} H^i_{IW^{-1}R} (W^{-1}M)$ if and only if $\p\in \Ass_R H^i_I (M)$ and $\p\cap W=\phi$.\newline 
As a consequence, $\Ass_RH^i_I(M)$ is finite implies that $\Ass_{W^{-1}R}H^i_{IW^{-1}R}(W^{-1}M)$ is finite.\newline
(2) For every $i\geq 0$, if only finite number of primes of $\Ass_RH^i_I(M)$ intersects with $W$, then finiteness of $\Ass_{W^{-1}R}H^i_{IW^{-1}R}(W^{-1}M)$ implies the finiteness of $\Ass_R H^i_I(M)$.\newline 
In particular, for every $i\geq 0$, if only finite number of primes of $\V(I)$ intersects with $W$, then finiteness of $\Ass_{W^{-1}R}H^i_{IW^{-1}R}(W^{-1}M)$ implies the finiteness of $\Ass_R H^i_I(M)$.\newline
(3) Let $W=\{a^n: a\in R, n\in {\ZZ}_+\}$. For every $i\geq 0$, if $\Ass_R H^i_I(M)$ is contained in the open subset $D(a)=\Spec R-V(a)$, then finiteness of $\Ass_{W^{-1}R}H^i_{IW^{-1}R}(W^{-1}M)$ implies the finiteness of $\Ass_R H^i_I(M)$.\newline 
In particular, for every $i\geq 0$, if $\V(I +(a)R)$ is finite, then finiteness of $\Ass_{W^{-1}R}H^i_{IW^{-1}R}(W^{-1}M)$ implies the finiteness of $\Ass_R H^i_I(M)$.
\end{prop}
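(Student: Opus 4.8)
The plan is to prove all three parts by reducing to the localization correspondence for associated primes, which is the engine behind the whole proposition. The cornerstone is the standard fact that for any finitely generated (or arbitrary, via a limit argument) $R$-module $N$, a prime $\q W^{-1}R$ lies in $\Ass_{W^{-1}R}(W^{-1}N)$ if and only if $\q \in \Ass_R N$ and $\q \cap W = \phi$; the bijection is $\p \leftrightarrow \p W^{-1}R$ between $\Ass_R N$-primes disjoint from $W$ and $\Ass_{W^{-1}R}(W^{-1}N)$. I would apply this with $N = H^i_I(M)$, using the flatness of localization to commute it with cohomology: since $W^{-1}R$ is flat over $R$, one has the natural isomorphism $W^{-1}H^i_I(M) \cong H^i_{IW^{-1}R}(W^{-1}M)$. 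This identification is what lets me transport associated-prime data between the two sides.

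For part (1), I would first record the isomorphism $H^i_{IW^{-1}R}(W^{-1}M) \cong W^{-1}H^i_I(M)$, and then the stated equivalence is exactly the localization correspondence applied to $N = H^i_I(M)$: a prime of $W^{-1}R$ has the form $\p W^{-1}R$ with $\p \cap W = \phi$, and it is associated to the localized module precisely when $\p \in \Ass_R H^i_I(M)$ with $\p\cap W=\phi$. The finiteness consequence is then immediate: the map $\p \mapsto \p W^{-1}R$ sends the (finite) set $\Ass_R H^i_I(M)$ onto $\Ass_{W^{-1}R}H^i_{IW^{-1}R}(W^{-1}M)$, so the latter is a quotient image of a finite set and hence finite. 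This part is purely a repackaging of the correspondence and should be routine.

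For part (2), the correspondence shows that $\Ass_R H^i_I(M)$ is partitioned into the primes meeting $W$ and the primes disjoint from $W$; the latter are in bijection with $\Ass_{W^{-1}R}H^i_{IW^{-1}R}(W^{-1}M)$ by part (1). Thus if the set of associated primes meeting $W$ is finite and the localized set of associated primes is finite, the union $\Ass_R H^i_I(M)$ is finite. The ``in particular'' clause then follows from Lemma 2.2, which gives $\Ass_R H^i_I(M) \subset \V(I)$: if only finitely many primes of $\V(I)$ meet $W$, then a fortiori only finitely many primes of $\Ass_R H^i_I(M)$ meet $W$, and the hypothesis of the first statement is met.

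For part (3), with $W = \{a^n\}$, a prime $\p$ meets $W$ exactly when $a \in \p$, i.e. when $\p \in \V(a)$; equivalently $\p \cap W = \phi$ iff $\p \in D(a)$. So the hypothesis $\Ass_R H^i_I(M) \subset D(a)$ says precisely that \emph{no} associated prime meets $W$, whence by part (2) (with the empty set of associated primes meeting $W$, which is trivially finite) finiteness descends. For the ``in particular'' clause I would again invoke $\Ass_R H^i_I(M) \subset \V(I)$ from Lemma 2.2: the associated primes lying in $\V(I)$ but \emph{not} in $D(a)$ are exactly those in $\V(I) \cap \V(a) = \V(I + (a)R)$, so if $\V(I+(a)R)$ is finite then only finitely many associated primes fail to lie in $D(a)$, and part (2) applies directly. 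The only point requiring any care, and the mildest obstacle, is the bookkeeping of which primes meet $W$ versus lie in $D(a)$ and checking that the flat base-change isomorphism for local cohomology is being applied to an arbitrary (not necessarily finitely generated) module $M$; the associated-prime localization correspondence holds in that generality, so no genuine difficulty arises.
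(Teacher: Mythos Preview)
Your proposal is correct. The paper itself does not supply a proof of this proposition: it is introduced as a summary of ``well-known results'' obtained by specializing Proposition~2.1 to the case $S = W^{-1}R$, and no argument is written out. Your route---flat base change $W^{-1}H^i_I(M)\cong H^i_{IW^{-1}R}(W^{-1}M)$, the standard localization correspondence for associated primes over a Noetherian ring, and the containment $\Ass_R H^i_I(M)\subset \V(I)$ from Lemma~2.2 for the ``in particular'' clauses---is exactly the intended unpacking of those well-known facts, so there is nothing to contrast.
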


In (1) of Proposition 2.3, we observe how finiteness property of associated primes is transferred from $R$ to $W^{-1}R$, while in (2) and (3) of Proposition 2.3, we have partial results that how finiteness property of associated primes can be transferred from $W^{-1}R$ to $R$. For a more comprehensive situation, see Theorem 3.3.

%%%%%%%%%%%%%%%%%%%%%%%%%%%%%%%%%%%%%%%%%%%%%%%%%%%%%%%%%%%%%%%%%%%%%%%%%%%%%%%%%%%%%%%%%%%%%%%%%%%%%%%%%%%%%%%%%%%%%%%%%%%%%%
%%%%%%%%%%%%%%%%%%%%%%%%%%%%%%%%%%%%%%%%%%%%%%%%%%%%%%%%%%%%%%%%%%%%%%%%%%%%%%%%%%%%%%%%%%%%%%%%%%%%%%%%%%%%%%%%%%%%%%%%%%%%%%%%
%%%%%%%%%%%%%%%%%%%%%%%%%%%%%%%%%%%%%%%%%%%%%%%%%%%%%%%%%%%%%%%%%%%%%%%%%%%%%%%%%%%%%%%%%%%%%%%%%%%%%%%%%%%%%%%%%%%%%%%%%%%%%%%%%
\section{behaviour of finiteness of the set of associated primes under flat and faithfully flat ring extensions}

In this section, we study three applications of the basic results as discussed in the previous section. 

\subsection{Generalization of the result of the part of the Theorem 4.1 of \cite{BBLSZ}}

At first, we recall the definition of regular algebra, see (\cite{CAofM}, page 249). Here, we call them as smooth algebra. We observe the following result for a smooth algebra.

\begin{lemma}
If $B$ is a smooth $A$-algebra, then for any multiplicatively closed set $W$ of $A$, $W^{-1}B$ is a smooth $W^{-1}A$-algebra.
\end{lemma}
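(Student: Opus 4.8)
The plan is to reduce the statement to the definition of smooth (regular) algebra and then verify that each defining property is stable under localization of the base. Recall that a smooth $A$-algebra $B$ is, by the definition cited from \cite{CAofM}, an $A$-algebra that is finitely presented, flat over $A$, and has geometrically regular fibers (equivalently, for each prime $\p$ of $A$ the fiber ring $B\otimes_A \kappa(\p)$ is geometrically regular over the residue field $\kappa(\p)$). So the task is to show that $W^{-1}B = B\otimes_A W^{-1}A$ inherits all three of these properties as a $W^{-1}A$-algebra.

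First I would handle the two easy structural properties. Finite presentation is preserved under arbitrary base change, so $W^{-1}B = B\otimes_A W^{-1}A$ is finitely presented over $W^{-1}A$; concretely, a presentation of $B$ over $A$ localizes to a presentation of $W^{-1}B$ over $W^{-1}A$. Flatness is likewise preserved under base change: since $B$ is flat over $A$, the module $B\otimes_A W^{-1}A$ is flat over $W^{-1}A$. (Alternatively, one can argue directly that localizing a flat module keeps it flat, using that $W^{-1}R$ is itself flat over $R$ and composing base changes.) These two steps are formal and require no real work.

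The substantive step is the fiber condition, and this is where I would focus the argument. The key point is that the primes of $W^{-1}A$ are exactly the primes $\p$ of $A$ with $\p\cap W=\emptyset$, and for such a prime the residue field computed in $W^{-1}A$ agrees with the one computed in $A$, i.e. $\kappa(\p W^{-1}A)\cong\kappa(\p)$. Consequently the fiber of $W^{-1}B$ over $\p W^{-1}A$ is
$$ (W^{-1}B)\otimes_{W^{-1}A}\kappa(\p W^{-1}A)\;\cong\;B\otimes_A\kappa(\p), $$
which is exactly the fiber of $B$ over $\p$ as an $A$-algebra. Since $B$ is smooth over $A$, this fiber is geometrically regular over $\kappa(\p)$, and therefore $W^{-1}B$ has geometrically regular fibers over $W^{-1}A$. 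In other words, localizing the base simply throws away those fibers sitting over primes meeting $W$ and leaves the surviving fibers literally unchanged, so geometric regularity is automatic for what remains.

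I expect the main (though still mild) obstacle to be bookkeeping around the precise definition of smooth/regular being used, especially the geometric regularity of fibers and the isomorphism of fibers under base change. One must be careful that geometric regularity is a condition tested after a further field extension, and verify that the identification of fibers above respects that extra extension; but since the residue field is unchanged, any field extension of $\kappa(\p W^{-1}A)$ is the same as one of $\kappa(\p)$, so geometric regularity transfers verbatim. Apart from pinning down these definitional compatibilities, the proof is a routine assembly of standard base-change stability results, and I would present it by stating each of the three properties and citing or briefly justifying its preservation, concluding that $W^{-1}B$ is smooth over $W^{-1}A$.
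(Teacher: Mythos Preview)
Your proof is correct and follows essentially the same route as the paper: check that flatness is preserved under base change, identify $\kappa(\p W^{-1}A)$ with $\kappa(\p)$ for primes $\p$ disjoint from $W$, and conclude that the fibers of $W^{-1}B$ over $W^{-1}A$ coincide with the corresponding fibers of $B$ over $A$, hence are geometrically regular. The paper's proof spells out the fiber identification by tensoring with a finite extension $L$ of the residue field, which is exactly the check you describe in your last paragraph. One small note: the definition of ``regular algebra'' on the cited page of \cite{CAofM} is flat with geometrically regular fibers, without a finite presentation hypothesis, so your first step (preservation of finite presentation) is superfluous here, though harmless.
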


\begin{proof}
Clearly $W^{-1}B$ is flat over $W^{-1}A$. Consider $P\in \Spec W^{-1}A$ with $P=\p W^{-1}A$ for some $\p\in \Spec A$. Now $\kappa(\p W^{-1}A)=(W^{-1}A)_{\p W^{-1}A}/(\p W^{-1}A)(W^{-1}A)_{\p W^{-1}A}= A_{\p}/\p A_{\p}=\kappa(\p)$. Thus for any finite extension $L$ of $\kappa(\p W^{-1}A)$, $W^{-1}B\otimes_{W^{-1}A} L= (W^{-1}B\otimes_{W^{-1}A} \kappa(\p W^{-1}A))\otimes_{\kappa(\p W^{-1}A)} L= ((B\otimes_A W^{-1}A)\otimes_{W^{-1}A} \kappa(\p W^{-1}A))\otimes_{\kappa(\p W^{-1}A)} L= B\otimes_A \kappa(\p)\otimes_{\kappa(\p)} L= B\otimes_A L$. Since $B\otimes_A L$ is regular, we conclude.
\end{proof}

For the rest of this subsection we adopt the following terminology: We say a Noetherian ring $R$ satisfies `Finiteness condition of associated primes' if for every ideal $I\subset R$ and for every $i\geq 0$, $\Ass_R H^i_I (R)$ is finite. For primes $a_1,\ldots,a_n$ in $\ZZ$, let $\ZZ_{a_1,\dots,a_n}$ be the ring where every prime of $\ZZ$ is inverted except the prime ideals $a_1,\ldots,a_n$. Consider a Noetherian semi-local domain $V_{a_1,\ldots,a_n}$ of characteristic zero with $n$ maximal ideals where each of them is generated by each of the prime integers $a_1,\ldots,a_n$. Clearly $\ZZ_{a_1,\dots,a_n}$ sits inside $V_{a_1,\ldots,a_n}$. This is a semi-local Noetherian domain of mixed characteristic where every maximal ideal is generated by some prime integer of $\ZZ$. Certainly, this semi-local ring is a generalization of $p$-ring as defined in \cite{CRTofM}. 

In the following theorem, we apply the results of Proposition 2.3 to show that any smooth algebra $R$ over $V_{a_1,\ldots,a_n}$ satisfies `Finiteness condition of associated primes'. This generalizes the part of the result of Theorem 4.1 of \cite{BBLSZ}.

\begin{thm}
(1) Let $a_1,\ldots,a_n$ be prime integers in $\ZZ$, Let $R$ be a finitely generated flat $V_{a_1,\ldots,a_n}$-algebra containing $V_{a_1,\ldots,a_n}$ as a subring of it. For any prime $a\in \ZZ$, if every finitely generated flat algebra over $V_a$ satisfies `Finiteness condition of associated primes', then for every $i\geq 0$ and for every $I\subset R$, we have $\Ass_R H^i_I(R)$ is finite. 

(2) In particular, let $a_1,\ldots,a_n$ be prime integers in $\ZZ$, Let $R$ be a smooth $V_{a_1,\ldots,a_n}$-algebra containing $V_{a_1,\ldots,a_n}$ as a subring of it. Then for every $i\geq 0$ and for every $I\subset R$, we have $\Ass_R H^i_I(R)$ is finite. 
\end{thm}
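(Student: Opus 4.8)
The plan is to reduce the semi-local base to the local rings $V_{a_j}$ by inverting the other primes, to establish finiteness after each such localization, and then to glue the pieces together using Proposition 2.3. Throughout write $V = V_{a_1,\ldots,a_n}$, a semi-local domain with maximal ideals $(a_1),\ldots,(a_n)$.

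First I would record the observation that drives everything: distinct maximal ideals of a semi-local ring are comaximal, so $(a_k)+(a_{k'}) = V$ for $k\neq k'$, and extending scalars to $R$ gives $(a_k,a_{k'})R = R$. Hence no prime of $R$ contains two of the integers $a_1,\ldots,a_n$. Setting $b_j = \prod_{k\neq j} a_k$, it follows that every $\p\in\Spec R$ omits at least one $b_j$: if $\p$ contains a single $a_{j_0}$ then $b_{j_0}\notin\p$, while if $\p$ contains none of them then $b_j\notin\p$ for every $j$. Equivalently, $\Spec R = \bigcup_{j=1}^n D(b_j)$.

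Next, for $W_j = \{b_j^m : m\geq 0\}$, I would check that $R[1/b_j] = W_j^{-1}R$ is a finitely generated flat algebra over $V_{a_j} := V[1/b_j]$ that contains it as a subring: here $V[1/b_j]$ is precisely the localization of $V$ leaving only the maximal ideal $(a_j)$, i.e. the local ring $V_{a_j}$, and since $V\hookrightarrow R$ with $V\to V_{a_j}$ flat, base change along this flat map yields $V_{a_j}\hookrightarrow R\otimes_V V_{a_j} = R[1/b_j]$, which is finitely generated and flat over $V_{a_j}$. Now fix $i\geq 0$ and an ideal $I\subset R$. Proposition 2.3(1), applied with $M=R$ and $W=W_j$, identifies $\{\p\in\Ass_R H^i_I(R): b_j\notin\p\}$ with $\Ass_{R[1/b_j]}H^i_{IR[1/b_j]}(R[1/b_j])$. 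Under the hypothesis of (1) the ring $R[1/b_j]$, being a finitely generated flat $V_{a_j}$-algebra, satisfies the Finiteness condition, so this set is finite for each $j$. Because the $D(b_j)$ cover $\Spec R$,
$$\Ass_R H^i_I(R) = \bigcup_{j=1}^n \{\p\in\Ass_R H^i_I(R): b_j\notin\p\}$$
is a finite union of finite sets, which proves (1).

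For (2) I would run the identical reduction with ``smooth'' replacing ``flat'': by Lemma 3.1 each $R[1/b_j]$ is a smooth $V_{a_j}$-algebra, and $V_{a_j}$ is a local ring whose maximal ideal is generated by the prime $a_j$, so Theorem 4.1 of \cite{BBLSZ} supplies finiteness of $\Ass_{R[1/b_j]}H^i_{J}(R[1/b_j])$ for all ideals $J$ and all $i$; the same gluing step then makes the conclusion unconditional. The genuinely load-bearing point is the comaximality remark, which guarantees that the finitely many localizations $D(b_j)$ already see every associated prime and leaves no ``diagonal'' prime containing two of the $a_j$ unaccounted for; the remaining verifications (finite generation and flatness of $R[1/b_j]$, and the injectivity $V_{a_j}\hookrightarrow R[1/b_j]$) are routine, while in (2) the only external ingredient is the local case from \cite{BBLSZ}, accessed through Lemma 3.1.
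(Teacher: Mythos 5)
Your proof is correct, and it reorganizes the argument in a genuinely different way from the paper. The paper proceeds by induction on $n$, at each step inverting a single prime $a$ to pass from $V_{a_1,\ldots,a_n}$ to $V_{a_2,\ldots,a_n}$, and within each step splits into cases according to whether $I$ contains $a$, contains $b$, or contains neither, invoking parts (1), (2) and (3) of Proposition 2.3 in the various cases. You instead set $b_j=\prod_{k\neq j}a_k$, observe from the comaximality of the $a_k$ in $R$ that $\Spec R=\bigcup_j D(b_j)$, and then apply Proposition 2.3(1) once for each $j$ to identify $\{\p\in\Ass_R H^i_I(R):b_j\notin\p\}$ with the associated primes over the localized ring $R[1/b_j]$, which is a finitely generated flat (resp.\ smooth, via Lemma 3.1) algebra over the local ring $V[1/b_j]=V_{a_j}$. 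This covering argument is essentially the paper's treatment of its ``neither $a$ nor $b$ lies in $I$'' subcase, promoted to the general case: it subsumes the other subcases (when $b\in I$ every associated prime contains $b$ and hence avoids $b$'s complement automatically), eliminates the induction entirely, and uses only part (1) of Proposition 2.3. What you lose is negligible; what you gain is a uniform one-step proof for all $n$ and a cleaner derivation of (2), since you verify the needed local input (Theorem 4.1 of \cite{BBLSZ} applied to the smooth $V_{a_j}$-algebras $R[1/b_j]$) directly rather than routing it through the hypothesis of (1), whose universal quantifier over all finitely generated flat $V_a$-algebras is not literally supplied by \cite{BBLSZ}. The only points you should make explicit are the routine ones you flag: that $V[1/b_j]$ is indeed local with maximal ideal $(a_j)$ (which follows since the principal maximal ideals force $\dim V=1$), and that injectivity of $V_{a_j}\to R[1/b_j]$ comes from flatness of localization applied to $V\hookrightarrow R$.
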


\begin{proof}
(1) We proceed by induction on $n$. When $n=1$, the result follows from hypothesis. Now we consider the case $n=2$. Set $a_1 =a$ and $a_2= b$. According to the hypothesis, in $R$, $a$ and $b$ are coprimes. For any $I\subset R$, if it contains $b$, we always have $I+ (a)R= R$. Thus $\V(I)\cap \V((a))= \V(I+(a))=\phi$. Thus $I$ can not contain both $a$ and $b$. So, we assume that $b\in I$. On the other hand, form Lemma 2.2 we get, $\Ass_{R}H^i_{I} (R)\subset \Supp_{R} H^i_{I}(R)\subset V(I)$. This implies that no prime of $\Ass_{R} H^i_I (R)$ intersects with $W_a=\{a^n: n\in {\ZZ}_+\}$. 

Now $V_b$ is localization of $V_{a, b}$ for the multiplicatively closed set $W_a$. Let $R_a$ be the ring of localization for the multiplicatively closed set $W_a$. Since $R$ is a finitely generated flat $V_{a, b}$-algebra, by above Lemma $R_a$ is also a finitely generated flat $V_b$-algebra. From hypothesis we find that $\Ass_{R_a} H^i_J(R_a)$ is a finite set for every $i\geq 0$ and for every ideal $J\subset R_a$. Thus from (3) of above Proposition 2.3, for every $i\geq 0$ and for every $I\subset R$ which contains $b$, we always have $\Ass_R H^i_I(R)$ is finite. Now from the symmetry in the above argument, the same holds when $a\in I$.  

Finally, assume neither $a$ nor $b$ is in $I$. Consider the multiplicatively closed set $W_a$ and $W_b$. If there are finitely many primes of $\Ass_R H^i_I(R)$ intersects $W_a$ or $W_b$, then using (2) of Proposition 2.3, we find $\Ass_R H^i_I(R)$ is finite. Otherwise, each of the elements $a$ and $b$ are in infinitely many primes from $\Ass_R H^i_I (R)$. Set $A_a=\{{\p}\in \Ass_R H^i_I(R): a\notin \p \}$, $A_b=\{{\p}\in \Ass_R H^i_I(R): b\notin \p \}$ and $A_{a,b}=\{{\p}\in \Ass_R H^i_I(R): a\notin \p, b\notin \p\}$. Here, $A_{a,b}=A_a \cap A_b$ and moreover no prime can contain both $a$ and $b$, since $a$ and $b$ are coprimes. Thus we find $\Ass_R H^i_I(R)= A_a \cup A_b$. 

Now we observe the following: For $a\in R$, using (1) of Proposition 2.3 we get that, $\p\in A_a$ if and only if $\p R_a\in \Ass_{R_a} H^i_{IR_a} (R_a)$. From hypothesis $\Ass_{R_a} H^i_{IR_a} (R_a)$ is finite. Similar is true for $b$. Thus $\Ass_R H^i_I (R)$ is finite and this concludes the proof of case $n=2$.

Assume the result is true for $n-1$ i.e. any finitely generated flat algebra over $V_{{a_1},\ldots,{a_{n-1}}}$ satisfies `Finiteness condition of associated primes'. Now consider a finitely generated flat algebra $R$ over $V_{{a_1},\ldots,{a_{n}}}$. Set $a_1= a$, $a_2= b$ and $V_{{a_3},{a_3}\ldots,{a_{n}}}= U$. Clearly, $V_{{a_1},\ldots,{a_{n}}}= U_{a, b}$. Now $U_b$ is localization of $U_{a, b}$ for the multiplicatively closed set $W_a$. Let $R_a$ be the ring of localization for the multiplicatively closed set $W_a$. Since $R$ is a finitely generated flat $U_{a, b}$-algebra, by above Lemma $R_a$ is also a finitely generated flat $U_b$-algebra. From inductive hypothesis, we find that $\Ass_{R_a} H^i_J(R_a)$ is a finite set for every $i\geq 0$ and for every ideal $J\subset R_a$. Thus, from (3) of above Proposition 2.3, for every $i\geq 0$ and for every $I\subset R$ which contains $b$, we always have $\Ass_R H^i_I(R)$ is finite. Now from the symmetry in the above argument, the same holds when $a\in I$.  

Finally, assume neither $a$ nor $b$ is in $I$. In this case, we can argue in the similar way to that of $n= 2$ case. %Consider the multiplicatively closed set $W_a$ and $W_b$. If there are finitely many primes of $\Ass_R H^i_I(R)$ intersects $W_a$ or $W_b$, then using (2) of Proposition 2.3, we find $\Ass_R H^i_I(R)$ is finite. Otherwise, each of the elements $a$ and $b$ are in infinitely many primes from $\Ass_R H^i_I (R)$. Set $A_a=\{{\p}\in \Ass_R H^i_I(R): a\notin \p \}$, $A_b=\{{\p}\in \Ass_R H^i_I(R): b\notin \p \}$ and $A_{a,b}=\{{\p}\in \Ass_R H^i_I(R): a\notin \p, b\notin \p\}$. Here, $A_{a,b}=A_a \cap A_b$ and moreover no prime can contain both $a$ and $b$, since $a$ and $b$ are coprimes. Thus we find $\Ass_R H^i_I(R)= A_a \cup A_b$. Now we observe the following: For $a\in R$, using (1) of Proposition 2.3 we get that, $\p\in A_a$ if and only if $\p R_a\in \Ass_{R_a} H^i_{IR_a} (R_a)$. From hypothesis $\Ass_{R_a} H^i_{IR_a} (R_a)$ is finite. Similar is true for $b$. 
Thus we get that $\Ass_R H^i_I (R)$ is finite and this proves the result for $n$. Thus inductively, we conclude the proof.  

(2) From Theorem 4.1 of \cite{BBLSZ} we know that every smooth algebra over $V_a$ satisfies `Finiteness condition of associated primes'. Now, using Lemma 3.1 the result follows from (1). 
\end{proof}

\subsection{Behaviour of finiteness of associated primes; when we come down from localized ring to base ring}
 
In Proposition 2.1 we observe that, for flat extension of Noetherian rings, finiteness property of set of associated primes can be transferred from base ring to the extended ring. We can do the reverse for faithfully flat situation. Localization is never faithfully flat unless we localize the ring trivially, i.e. by the set of units. Due to the flatness, in (1) of Proposition 2.3, we observe that `Finiteness condition of associated primes' goes up from base ring to its localization. But, we have only partial results that how finiteness property can come down from localized ring to base ring, see (2) and (3) of Proposition 2.3. 

In the following theorem i.e. in Theorem 3.3 below, we study that how finiteness property can come down from localized ring to base ring. 
 
\begin{thm}
Let $R$ be a Noetherian ring of dimension $d$, and $M$ an $R$-module. Suppose that $Ass_R M_f$ is finite for every not unit $f\in R$. Then, $Ass_R M$ is finite.\newline 
In particular, if $Ass_R H^i_I (R_f)$ is finite for every non unit $f\in R$ and every ideal $I\subset R$, then, $Ass_R H^i_I (R)$ is finite.
\end{thm}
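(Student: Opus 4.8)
The plan is to prove the first (general) assertion by contradiction and then deduce the ``in particular'' clause by a flat base change. Throughout set $\mathcal{A} = \Ass_R M$ and suppose, for contradiction, that $\mathcal{A}$ is infinite.

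First I would translate the hypothesis into a statement purely about $\mathcal{A}$. For a non-unit $f$, writing $W_f = \{f^n : n \geq 0\}$, the standard description of associated primes under localization over a Noetherian ring gives $\Ass_R M_f = \{\p \in \mathcal{A} : f \notin \p\}$ (this is the module-level analogue of (1) of Proposition 2.3). Hence the hypothesis says precisely that $\{\p \in \mathcal{A} : f \notin \p\}$ is finite for every non-unit $f$; since $\mathcal{A}$ is assumed infinite, the complementary set $\{\p \in \mathcal{A} : f \in \p\}$ is cofinite in $\mathcal{A}$, and in particular infinite, for every non-unit $f$.

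The key step is to turn this cofiniteness into a climbing mechanism. Since the cofinite subsets of $\mathcal{A}$ are closed under finite intersection, I would observe that for any $\p \in \mathcal{A}$ --- which is finitely generated, say $\p = (g_1, \ldots, g_s)$, with every $g_j$ a non-unit because $\p$ is proper --- the set $\{\p' \in \mathcal{A} : \p \subseteq \p'\} = \bigcap_{j} \{\p' \in \mathcal{A} : g_j \in \p'\}$ is a finite intersection of cofinite sets, hence cofinite, hence infinite. Discarding the single prime $\p$ itself, there remains $\p' \in \mathcal{A}$ with $\p \subsetneq \p'$. Starting from any $\p_0 \in \mathcal{A}$ and iterating, I obtain a strictly increasing chain $\p_0 \subsetneq \p_1 \subsetneq \cdots$ of members of $\mathcal{A}$; after $d+1$ steps this is a chain of $d+2$ primes in $\Spec R$, contradicting $\dim R = d$. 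This contradiction forces $\mathcal{A}$ to be finite. The crux is exactly this climbing argument, and it is here that the finiteness of $d$ is consumed; everything else is bookkeeping.

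Finally, for the ``in particular'' clause I would apply the general assertion with $M = H^i_I(R)$. Since localization is flat and local cohomology commutes with flat base change, $H^i_I(R_f) \cong \big(H^i_I(R)\big)_f = M_f$ as $R$-modules, so the assumption ``$\Ass_R H^i_I(R_f)$ is finite for every non-unit $f$'' is literally ``$\Ass_R M_f$ is finite for every non-unit $f$'', and the general statement yields that $\Ass_R H^i_I(R) = \Ass_R M$ is finite. The only points I would want to verify carefully are that the localization--associated-primes identity and the flat base change isomorphism remain valid for the possibly non-finitely-generated module $H^i_I(R)$ over the Noetherian ring $R$, which they do.
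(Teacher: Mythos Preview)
Your proof is correct, but the paper takes a more direct and simpler route. Instead of arguing by contradiction and building an ascending chain, the paper fixes a single maximal ideal $\m = (f_1,\ldots,f_n)$ (finitely generated by Noetherianity, each $f_j$ a non-unit) and observes that any prime $\p \neq \m$ must omit some $f_j$, so $\Spec R = D_{f_1}\cup\cdots\cup D_{f_n}\cup\{\m\}$. Intersecting with $\Ass_R M$ and using $D_{f_j}\cap\Ass_R M = \Ass_R M_{f_j}$ gives $\Ass_R M \subseteq \Ass_R M_{f_1}\cup\cdots\cup\Ass_R M_{f_n}\cup\{\m\}$, which is finite. This one-step covering avoids the iterative climbing entirely, and it makes transparent that the finite-dimension hypothesis is not actually needed---only Noetherianity is used. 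Your argument also works without the dimension bound, incidentally: the infinite strictly ascending chain you produce already violates the ascending chain condition in a Noetherian ring, so invoking $\dim R = d$ is permissible but stronger than necessary. Your approach has the virtue of isolating a general principle (cofinite subsets closed under finite intersection force a climbing mechanism), while the paper's buys brevity and an explicit finite cover. Your handling of the ``in particular'' clause via flat base change for local cohomology matches the paper's.
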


\begin{proof}
Fix a maximal ideal $m = (f_1\ldots,f_n)\subset R$. Let $D_f = \Spec (R)-\V (f)$, where $\V (f) = \{\p\in Spec(R):f\in \p\}$. We know that $\Spec (R) = D_{f_1}\cup\ldots\cup D_{f_n}\cup\{m\}$. Then,
$$\Ass_R M = (D_{f_1}\cap \Ass_R M)\cup\ldots\cup (D_{f_n}\cap \Ass_R M)\cup (\{m\}\cap\Ass_R M)$$
$$= \Ass_R M_{f_1}\cup\ldots\cup\Ass_R M_{f_1}\cup (\{m\}\cap\Ass_R M)$$
$$\subset \Ass_R M_{f_1}\cup\ldots\cup\Ass_R M_{f_1}\cup\{m\}$$
Hence, $\Ass_R M$ is finite.

Now the second assertion is immediate from the first assertion.
\end{proof}

We conclude the section with the following corollaries. This again shows that how `finiteness of the set of associated primes' comes down from localized ring to the base ring.

\begin{cor}
Let $R$ be a Noetherian local domain of dimension $d$, which contains a field of characteristic $p>0$. Let for every prime $\p$ of the punctured spectrum of $R$, $R_{\p}$ is regular, then for an ideal $I\subset R$ and for $i\geq 0$, $\Ass_R H^i_I (R)$ is finite.
\end{cor}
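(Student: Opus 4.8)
The plan is to deduce this from Theorem 3.3 together with the Huneke--Sharp finiteness theorem for regular rings of prime characteristic (case (1) of the introduction, \cite{HS}). Write $\m$ for the maximal ideal of $R$. The whole point is that inverting any single non-unit turns the isolated-singularity hypothesis into honest regularity.

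First I would check that $R_f$ is a regular ring of characteristic $p$ for every non-unit $f\in R$. Since $R$ is local, a non-unit $f$ lies in $\m$, so no prime of $R$ containing $f$ survives in $R_f=R[f^{-1}]$; in particular $\m$ does not, and every prime of $R_f$ has the form $\p R_f$ with $\p$ in the punctured spectrum of $R$ and $f\notin\p$. As $(R_f)_{\p R_f}=R_{\p}$ is regular by hypothesis, every localisation of the Noetherian ring $R_f$ at a prime is regular, whence $R_f$ is regular. Because $R\subset R_f$ and $R$ contains a field of characteristic $p>0$, so does $R_f$.

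Next, for a fixed ideal $I\subset R$ and $i\geq 0$, I would apply \cite{HS} to the regular ring $R_f$ of characteristic $p$ with the ideal $IR_f$: this gives that $\Ass_{R_f}H^i_{IR_f}(R_f)$ is finite. Localisation is flat, so local cohomology commutes with it and $(H^i_I(R))_f=H^i_{IR_f}(R_f)$. For a module $N$ over the localisation $R_f$, the assignment $\p\mapsto\p R_f$ is a bijection between $\Ass_R N$ and $\Ass_{R_f}N$ (this is the module-theoretic content underlying (1) of Proposition 2.3), so $\Ass_R(H^i_I(R))_f$ is finite as well. Thus, in the notation of Theorem 3.3, $\Ass_R H^i_I(R_f)$ is finite for every non-unit $f\in R$ and every ideal $I$.

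Finally I would invoke Theorem 3.3 with $M=H^i_I(R)$ (equivalently, its second assertion) to conclude that $\Ass_R H^i_I(R)$ is finite. The only genuinely substantive step is the first one --- recognising that the hypothesis ``$R_{\p}$ regular on the punctured spectrum'' is exactly what is needed to make $R_f$ regular after deleting the closed point; the remainder is flat base change for local cohomology and the standard behaviour of associated primes under localisation. The one point to keep an eye on is that $f$ must be a non-unit (hence $f\in\m$) so that $\m$ is excluded from $\Spec R_f$; this is precisely the class of $f$ over which Theorem 3.3 requires control.
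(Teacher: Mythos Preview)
Your proof is correct and follows essentially the same route as the paper's: for each non-unit $f\in\m$ one observes that every prime of $R_f$ comes from the punctured spectrum, so $R_f$ is regular of characteristic $p$, whence Huneke--Sharp gives finiteness of $\Ass_{R_f}H^i_{IR_f}(R_f)$ and Theorem~3.3 finishes the argument. You have simply spelled out in full the steps (flat base change for local cohomology, the bijection $\Ass_R M_f\leftrightarrow\Ass_{R_f}M_f$) that the paper leaves implicit.
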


\begin{proof}
For any non-unit $a\in R$, for every $\p$ such that $a\notin\p$, we have $R_{\p}$ is regular. So $R_{W_a}$ is regular. Using the result of \cite{HS} the assertions follows.
\end{proof}

\begin{cor}
Let $R$ be an excellent Noetherian local domain of dimension $d$, which contains a field of characteristic zero, then for an ideal $I\subset R$ and for $i= d-1, d$, $\Ass_R H^i_I (R)$ is finite. 
\end{cor}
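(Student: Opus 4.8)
The plan is to obtain both values of $i$ uniformly from Theorem 3.3, taking $M = H^i_I(R)$. By that theorem it suffices to prove that $\Ass_R\big(H^i_I(R)\big)_f$ is finite for every non-unit $f \in R$. Since localization is flat and commutes with local cohomology, $\big(H^i_I(R)\big)_f \cong H^i_{IR_f}(R_f)$, and the associated primes of this module over $R$ correspond bijectively to those over $R_f$ (namely the primes of $\Ass_R H^i_I(R)$ not meeting the powers of $f$). Hence I only have to bound $\Ass_{R_f} H^i_{IR_f}(R_f)$ for $i = d-1$ and $i = d$.

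The crux is a dimension count for $R_f$. Because $R$ is a local domain with maximal ideal $\m$, every non-unit $f$ lies in $\m$, so inverting $f$ destroys $\m$: no prime of $R_f$ comes from $\m$, and every chain of primes of $R$ avoiding $f$ sits inside the punctured spectrum. Appending $\m$ at the top of such a chain produces a chain in $R$ of length at most $d$, whence $\dim R_f \le d-1$. (If $f = 0$ then $R_f = 0$ and there is nothing to check.)

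With this in hand the two cases are immediate from the finiteness facts recalled in the introduction. For $i = d$ we have $\dim R_f \le d-1 < d$, so $H^d_{IR_f}(R_f) = 0$ by Grothendieck vanishing and $\Ass = \emptyset$. For $i = d-1$ the module $H^{d-1}_{IR_f}(R_f)$ either vanishes (when $\dim R_f < d-1$) or is the top local cohomology of the $(d-1)$-dimensional ring $R_f$, whose associated primes form a finite set by \cite{Mar}; either way $\Ass_{R_f} H^{d-1}_{IR_f}(R_f)$ is finite. Thus $\Ass_R\big(H^i_I(R)\big)_f$ is finite for every non-unit $f$, and Theorem 3.3 delivers the finiteness of $\Ass_R H^i_I(R)$ for $i = d-1, d$.

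I expect the only non-formal step to be the dimension drop $\dim R_f \le d-1$; everything else is the localization formula for local cohomology, the bijection of associated primes under localization, and the general top/near-top finiteness results. It is worth noting that the argument uses that $R$ is a local domain but does not visibly consume the excellence or characteristic-zero hypotheses; these situate the corollary alongside \cite{Pu2} and Corollary 5, whereas the finiteness for $i = d-1, d$ rests purely on the dimension count together with the recalled general results.
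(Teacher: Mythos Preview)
Your argument is correct, and it follows the same overall scheme as the paper---reduce via Theorem~3.3 to the localizations $R_f$ for non-units $f$---but the verification at the localized level is genuinely different. The paper's proof asserts that $R_{W_a}$ is an excellent \emph{regular} ring and then invokes \cite{Pu2} together with \cite{Mar}; that regularity claim echoes the proof of the preceding corollary, where it is justified by an explicit hypothesis on the punctured spectrum, but no such hypothesis is stated here. You sidestep this entirely: in place of regularity you use the dimension drop $\dim R_f \le d-1$ (valid because $R$ is a local domain and any non-unit $f$ lies in $\m$), so that $H^d_{IR_f}(R_f)=0$ by Grothendieck vanishing and $H^{d-1}_{IR_f}(R_f)$ is either zero or top local cohomology, with finitely many associated primes by \cite{Mar} alone. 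Your route is thus more elementary---it never needs \cite{Pu2}---and, as you correctly observe, it does not consume the excellence or characteristic-zero hypotheses; in effect you have shown the conclusion for any Noetherian local domain of dimension $d$.
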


\begin{proof}
For any non-unit $a\in R$, here, again $R_{W_a}$ is an excellent regular ring. Using the results of \cite{Pu2} and \cite{Mar} the assertions follows.
\end{proof}

\subsection{Extension of part of the result of Theorem 1.2 of \cite{Nu2}}

In \cite{Nu2}, finiteness of the set of associated primes of local cohomology for polynomial ring or for power series ring over one dimensional Noetherian ring is studied. There, in Theorem 1.2, we observe a partial result regarding this fact. Here, we extend that that result in the following theorem.

\begin{thm}
Let $(R, \m)$ be a two dimensional Noetherian local ring and let $S$ be a polynomial ring over $R$. For an ideal $J\subset S$ and for $i\geq 0$, consider the local cohomology module $H^i_J(S)$. Then, the subset of $\Ass_S H^i_J(S)$, whose every element contains only one fixed parameter, is finite. %$\sigma $ or $\tau$, then $\Ass_S H^i_J(S)$ is a finite set. 
\end{thm}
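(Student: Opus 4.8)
The plan is to organize the subset in question according to the contraction of its members to $R$, and then to reduce, one contraction at a time, to a polynomial ring over a local ring of dimension at most one, where the one-dimensional result (Theorem 1.2 of \cite{Nu2}) that this theorem extends can be applied. Write $S=R[X_1,\dots,X_t]$ and fix the parameter $x\in\m$. The subset under consideration consists of those $P\in\Ass_S H^i_J(S)$ that contain $x$ but not a full system of parameters; equivalently, writing $\p:=P\cap R$, these are the $P$ with $x\in\p$ and $\p\neq\m$.

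First I would check that only finitely many primes $\p$ of $R$ can occur as such a contraction. Since $R$ is local with $\dim R=2$, the ideal $\m$ is the unique prime of height $2$, so every $\p\neq\m$ has $\height\p\le 1$. If $\height\p=0$ then $\p$ is one of the finitely many minimal primes of $R$. If $\height\p=1$ and $x\in\p$, then $\p$ is minimal over $(x)$: a prime strictly between $(x)$ and $\p$ would be a minimal prime $\q$ of $R$ with $x\in\q$, and since $x$ is a parameter one has $\dim R/\q\le 1$ for every such $\q$ (otherwise $R/\q=R/((x)+\q)$ would be two-dimensional while being a quotient of the one-dimensional ring $R/(x)$), whence $\p/\q$ would be the maximal ideal of the local ring $R/\q$ and $\p=\m$, a contradiction. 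Thus the admissible contractions lie among the finitely many minimal primes of $R$ together with the finitely many minimal primes of $(x)$, a finite set $\p_1,\dots,\p_r$.

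Next, for each $\p=\p_k$ I would localize at the multiplicatively closed set $W=R\setminus\p\subseteq S$, so that $W^{-1}S=R_\p[X_1,\dots,X_t]$ is a polynomial ring over the local ring $R_\p$ of dimension $\height\p\le 1$. By part (1) of Proposition 2.3 (applied with the module $S$, the ideal $J$, and this $W$), every $P\in\Ass_S H^i_J(S)$ with $P\cap W=\emptyset$ — in particular every $P$ with $P\cap R=\p$ — corresponds bijectively, hence injectively, to $W^{-1}P\in\Ass_{W^{-1}S}H^i_{JW^{-1}S}(W^{-1}S)$; and a prime with $P\cap R=\p$ satisfies $W^{-1}P\cap R_\p=\p R_\p$, so its image lies over the maximal ideal of $R_\p$. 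Since $R_\p$ is local of dimension at most one, the subset of $\Ass_{W^{-1}S}H^i_{JW^{-1}S}(W^{-1}S)$ lying over the maximal ideal of $R_\p$ (equivalently, containing the single parameter of $R_\p$) is finite by Theorem 1.2 of \cite{Nu2}; the degenerate case $\height\p=0$, where $R_\p$ is Artinian, is covered by the finiteness of associated primes of local cohomology over the residue field of $R_\p$ via a finite filtration. By the injectivity above, the set of $P$ with $P\cap R=\p_k$ is finite for each $k$, and taking the union over $\p_1,\dots,\p_r$ gives finiteness of the whole subset.

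The main obstacle I anticipate is the matching of the two conditions across the localization — ``contains the fixed parameter $x$'' upstairs against ``lies over the maximal ideal of $R_\p$'' downstairs — so that the one-dimensional theorem of \cite{Nu2} is invoked for precisely the primes that appear after localizing; the other delicate point is the finiteness of the set of admissible contractions $\p$, which relies essentially on $x$ being a parameter of a two-dimensional \emph{local} ring, forcing every prime $\neq\m$ that contains $x$ to have height at most one and hence to be minimal over $(x)$ or over a minimal prime of $R$.
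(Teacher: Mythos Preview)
Your argument is correct, but the paper's proof is considerably shorter and avoids the case analysis on $\height\p$ altogether. The paper fixes a full system of parameters $\sigma,\tau$ with $\tau=x$ and observes that every prime in the set $B$ under discussion contains $\tau$ but not $\sigma$; it then simply inverts the single element $\sigma$. This one localization already lands in a polynomial ring $S_\sigma$ over the one-dimensional ring $R_\sigma$, and since $\dim R_\sigma/\tau R_\sigma=0$, Proposition~3.6 of \cite{Nu2} immediately gives that the primes of $\Ass_{S_\sigma}H^i_{JS_\sigma}(S_\sigma)$ containing $\tau$ form a finite set, hence so does $B$. Your approach instead stratifies $B$ by the contraction $\p=P\cap R$, proves that only finitely many $\p$ can occur (via a height argument using that $x$ is a parameter), and then localizes once at $R\setminus\p$ for each such $\p$; this is valid, but it forces you to treat the Artinian case $\height\p=0$ separately through a filtration argument, whereas the paper's single inversion of $\sigma$ handles all primes in $B$ uniformly. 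What your route buys is a finer picture of where the primes in $B$ sit over $\Spec R$; what the paper's route buys is brevity, by exploiting the second parameter $\sigma$ as a ready-made element whose inversion kills nothing in $B$ and drops the base dimension in one step.
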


\begin{proof}
Take a system of parameters $\sigma, \tau$ and consider the $\m$-primary ideal $(\sigma, \tau)R$. Inverting $\sigma$, we get one dimensional ring $R_{\sigma}$. Now $R/\tau R$ is local with maximal ideal $\m/\tau R$. Set $R/\tau R=\bar{R}$ and $\m/\tau R=\bar{\m}$. Clearly, $\sigma \bar{R}$ is $\bar{\m}$-primary ideal and thus $\dim \bar{R}_{\bar{\sigma}}= \dim \bar{R}_{\sigma}= \dim R_{\sigma}/\tau R_{\sigma}= 0$.

For $i\geq 0$ and for an ideal $J$ of $S$, consider $H^i_J(S)$. Let $B\subset \Ass_S H^i_J(S)$ contains only one of the parameter $\tau$ (say). Consider $S_{\sigma}$, which is a polynomial ring over $R_{\sigma}$ and for such a ring, using (1) of Proposition 2.3 we have, $\p S_{\sigma}\in \Ass_{S_{\sigma}} H^i_{JS_{\sigma}}(S_{\sigma})$ if and only if $\p\in \Ass_S H^i_J(S)$ and $\sigma\notin \p$. Since $\p\in B$ contains only one parameter $\tau$, it should not contain $\sigma$ and hence $B\subset \Ass_{S_{\sigma}} H^i_{JS_{\sigma}}(S_{\sigma})$. 
%Consider the subset $B^{\tau}_{\sigms}$ of $\Ass_S H^i_J(S)$ such that $\p\in B^{\tau}_{\sigma}$ if and only if $\p\in \Ass_S H^i_J(S)$ such that $\tau\in \p$ and $\sigma\notin \p$. 
Since, $\dim R_{\sigma}/\tau R_{\sigma}= 0$, from Proposition 3.6 of \cite{Nu2} we find that the subset $B$ of $\Ass_{W_{\sigma}} H^i_{JS_{\sigma}}(S_{\sigma})$ is finite. Thus we conclude. %Thus $\Ass_S H^i_J(S)$ is finite. From symmetry of the argument we find that if $H^i_J(S)$ contains only the parameter $\sigma$, then also we have the finiteness of $\Ass_S H^i_J(S)$. 
\end{proof}

%%%%%%%%%%%%%%%%%%%%%%%%%%%%%%%%%%%%%%%%%%%%%%%%%%%%%%%%%%%%%%%%%%%%%%%%%%%%%%%%%%%%%%%%%%%%%%%%%%%%%%%%%%%%%%%%%%%%%%%%%%%%%%%%%%
%%%%%%%%%%%%%%%%%%%%%%%%%%%%%%%%%%%%%%%%%%%%%%%%%%%%%%%%%%%%%%%%%%%%%%%%%%%%%%%%%%%%%%%%%%%%%%%%%%%%%%%%%%%%%%%%%%%%%%%%%%%%%%%%%%%
%%%%%%%%%%%%%%%%%%%%%%%%%%%%%%%%%%%%%%%%%%%%%%%%%%%%%%%%%%%%%%%%%%%%%%%%%%%%%%%%%%%%%%%%%%%%%%%%%%%%%%%%%%%%%%%%%%%%%%%%%%%%%%%%%%%%
\section{behaviour of finiteness of the set of associated primes under pure and cyclically pure ring extensions}

In this section, we study the behaviour of the finiteness of the set of associated primes of Lyubeznik functor under pure and cyclically pure ring extensions.
%Since cyclically pure extension is closely related with pure extension, at first we review briefly the behaviour of the finiteness of associated primes of the local cohomology modules under pure extensions. 
At first, we review basic results for pure extensions. In Theorem 1.1 of \cite{Nu}, it is proved that finiteness property of the set of associated primes of Lyubeznik functor $\T$ can be transferred from the ring to its subring which is a direct summand of it. In Theorem 4.1, we show that finiteness property of the set of associated primes of $\T$ can be transferred from the ring to its pure local subring and this extends the result of Theorem 1.1 of \cite{Nu}. Then, in Lemma 4.2, we show that after a base change by a complete ring, cyclically pure extension remains cyclically pure. In Theorem 4.3, where using Lemma 4.2, we show that under mild conditions on the rings, the finiteness property of of the set of associated primes of Lyubeznik functor $\T$ can be transferred from the ring to its cyclically pure local subring. In particular, we observe that the set of associated primes of $\T$ of a cyclically pure local subring (which turns out to be Cohen-Macaulay) of equicharacteristic or unramified regular local ring, is finite, see Corollary 7 and Remark 6.

We recall the definitions of pure and cyclically pure ring extension: Let $A\rightarrow B$ be an injective ring homomorphism. For every $A$-module $M$, if $M\rightarrow M\otimes B$ is injective then we say $A$ is a pure subring of $B$ or the ring extension $A\rightarrow B$ is a pure ring extension. Moreover, if the map remains injective only after tensoring with every $A$-module of the form $A/I$ for some ideal $I\subset A$ or equivalently, if for every ideal $I\subset A$, $A/I\rightarrow B/IB$ is injective then we call the ring extension $A\rightarrow B$ as cyclically pure ring extension.

\begin{rem}
From definition, it is immediate that purity implies cyclic purity, but the converse is true when the ring is approximately Gorenstein, see \cite{Ho1}. We mention examples of approximately Gorenstein ring:

(a) For complete semilocal reduced ring, more generally locally excellent reduced ring,

(b) For normal domain.
\end{rem}

Here, we present examples of pure ring extensions:

\begin{example}
(a) If a ring extension is faithfully flat, then it is a pure extension.

(b) For a ring homomorphism $A\rightarrow B$, if $A$ is a direct summand of $B$, equivalently if the map splits then it is pure.

(c) Here we mention a special situation when $A$ is a direct summand of $B$: Consider a group of automorphism $G$ of the ring $B$ and let $A=B^G$. If there exists Reynolds operator for the ring extension $B^G\rightarrow B$, then it is a pure extension. For ring extension $B^G\rightarrow B$, Reynolds operator exists if, (i) $G$ is linearly reductive algebraic group (see `Main Theorem' in \cite{HR}) or, (ii) $G$ is finite and $G$ has an inverse in $B$.  

\end{example}

We also observe the following examples of cyclically pure ring extensions:

\begin{example}
(a) Every pure extensions are cyclically pure (see \cite{Ho1} or Remark 1, for the situations when they are equivalent).

(b) In \cite{Ho1}, we also have examples of cyclically pure subrings of small dimensions. In section 3 and Example 5.4 of \cite{Ho1}, we can observe cyclically pure local subrings of dimension zero and one (see also reference [2] and [4] there). 

(c) Let $R$ be a $\CC$-affine local domain. Then, there exists a big Cohen-Macaulay algebra $B(R)$ of $R$, see \cite{Sc1}. Observe the definition of $B(R)$-regular ring, see  5.1 Definition of \cite{Sc1} (as for example, regular rings are $B(R)$-regular ring). It follows immediate from the definition that $R$ is $B(R)$-regular if and only if $R\rightarrow B(R)$ is cyclically pure. 

(d) Consider the situation of above example. Let $S$ be a subring (possibly Noetherian) of $B(R)$ such that $R\rightarrow S$ is an inclusion map. Since $I\subset IS\cap R\subset IB(R)\cap R = I$, we get that $R$ is cyclically pure in $S$. Thus, if $R$ is $B(R)$-regular then for any subring $S$ (possibly Noetherian) of $B(R)$ containing $R$, $R$ is cyclically pure in $S$. 

(e) In (\cite{Sc2}, Theorem 2.2), we have another `sufficient' condition for a Noetherian local ring to be a cyclically pure subring of an arbitrary ring: Let $(R,m)$ be a Noetherian local ring with residue field $k$ and let $S$ be an arbitrary $R$ algebra. If $\Tor^R_1 (S, k) = 0$ and $mS \neq S$, then $R\rightarrow S$ is cyclically pure. 
\end{example}

We note the following observation.

\begin{rem}
Let $A\rightarrow B$ be a homomorphism of Noetherian rings. Let $M$ be a finitely generated $B$-module. From (\cite{CRTofM}, Exercise 6.7), we find that if $Ass_B M$ is a finite set then so is $Ass_A M$. For arbitrary $B$-module $M$ we have a proof given in the Corollary 1.7 of \cite{Ya}. This will be used in the following theorem.
\end{rem}

\begin{thm}
(a) Let $(R,\m)$ be a Noetherian local ring and $R\rightarrow S$ be a pure extension of Noetherian rings. Then, for Lyubeznik functor $\T$, finiteness of $\Ass_S \T(S)$ implies the finiteness of $\Ass_R \T(R)$.\newline 
In particular, for every $i\geq 0$, if $\Ass_{S} H^i_J(S)$ is a finite set for every ideal $J\subset S$, then for every $i\geq 0$, $\Ass_{R} H^i_{I}(R)$ is also finite set for every ideal $I\subset R$.\newline

(b) Let $R\rightarrow S$ be a pure extension of Noetherian rings (where $R$ is not necessarily a local ring). Then for every $i\geq 0$, if $\Ass_{S} H^i_J(S)$ is a finite set for every ideal $J\subset S$, then for every $i\geq 0$, $\Ass_{R} H^i_{I}(R)$ is also finite set for every ideal $I\subset R$.
\end{thm}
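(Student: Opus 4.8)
The extension of Theorem 1.1 of \cite{Nu} from split to pure extensions should rest on the principle that an additive functor carries split monomorphisms to split monomorphisms, so that whenever the base is a direct summand of the top, the same holds after applying $\T$ and associated primes can only shrink. Since a pure inclusion need not split over a general local ring, the plan for (a) is to pass to the completion $\hat R$, where purity does force splitting, and to run the direct-summand argument there. Two auxiliary facts drive this: first, purity is stable under arbitrary base change, so $R'\to R'\otimes_R S$ is pure for every $R\to R'$ (for an $R'$-module $N$ one has $N\otimes_{R'}(R'\otimes_R S)=N\otimes_R S$, and $N\to N\otimes_R S$ is injective by purity of $R\to S$); second, a pure extension of a complete local ring splits (Hochster, cf.\ \cite{Ho1}: applying $\mathrm{Hom}_{\hat R}(-,E)$ with $E=E_{\hat R}(k)$ to the injection $E\to E\otimes_{\hat R}(\hat R\otimes_R S)$ yields a retraction).

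With these, let $\hat R$ be the $\m$-adic completion and $T=\hat R\otimes_R S$. From finiteness of $\Ass_S\T(S)$, Remark 3 gives finiteness of $\Ass_R\T(S)$ (viewing $\T(S)$ as an $R$-module through $R\to S$), and Proposition 2.1(1) applied to the flat map $R\to\hat R$ then gives finiteness of $\Ass_{\hat R}(\T(S)\otimes_R\hat R)$. Flat base change for Lyubeznik functors (Lemma 3.1 of \cite{Ly1}, cf.\ Remark 1), combined with the restriction-of-scalars identity between $\T$ computed over $S$ and the $R$-functor evaluated at the $R$-module $S$, identifies $\T(S)\otimes_R\hat R$ with the Lyubeznik functor over $\hat R$ evaluated at $T$. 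By the first auxiliary fact the map $\hat R\to T$ is pure, and by the second it splits, so $\hat R$ is an $\hat R$-module direct summand of $T$; additivity of $\T$ then makes $\T(\hat R)$ a direct summand of $\T$ evaluated at $T$, whence $\Ass_{\hat R}\T(\hat R)$ is finite. Faithful flatness of $R\to\hat R$ and Corollary 1(3) descend this to finiteness of $\Ass_R\T(R)$; the local cohomology statement is the case $\T=H^i_I$ with the supports matched by $J=IS$.

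The main obstacle is precisely the splitting step: the whole argument is arranged so that purity-implies-splitting is invoked only over the complete ring $\hat R$, and I deliberately route through $\T(S)\otimes_R\hat R$ rather than through the extension $S\to T$, because $T=\hat R\otimes_R S$ need not be Noetherian and the faithfully flat corollary cannot be applied to it directly.

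For part (b) the plan is to delete the local hypothesis by descending to localizations and quoting (a). Purity localizes, so $R_f\to S_f$ is pure for every non-unit $f$, and $\Ass_{S_f}H^i_{IS_f}(S_f)$ remains finite by Proposition 2.3(1). Since $(H^i_I(R))_f=H^i_{IR_f}(R_f)$, Theorem 3.3 reduces the global statement to finiteness of $\Ass_R(H^i_I(R))_f$ for each non-unit $f$, assembling these via the covering $\Spec R=D_{f_1}\cup\cdots\cup D_{f_n}\cup\{\m\}$ attached to one maximal ideal $\m\supseteq I$. The easy inputs are immediate: for $f\in\sqrt I$ the module $H^i_{IR_f}(R_f)$ vanishes, so such localizations contribute nothing. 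I expect the real obstacle to be that the surviving localizations $R_f$ are themselves non-local, so (a) does not apply to $R_f\to S_f$ verbatim; breaking this apparent circularity — by an induction that feeds the local case (a), applied at the maximal ideals of $R_f$, back into the covering of Theorem 3.3 while genuinely simplifying the data at each step — is the delicate point, since naive localization at maximal ideals controls only the primes lying below a single maximal ideal and need not bound the global set.
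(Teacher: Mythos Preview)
For part (a) your plan coincides with the paper's: pass to the $\m$-adic completion of $R$, upgrade purity to a splitting over the complete base, apply Theorem~1.1 of \cite{Nu}, and descend via Corollary~1. The only difference is the choice of auxiliary ring upstairs: the paper uses the $\m$-adic completion $\hat S$ of $S$ (which is Noetherian, so Corollary~1 and \cite{Nu} apply verbatim to $S\to\hat S$ and $\hat R\to\hat S$), whereas you use $T=\hat R\otimes_R S$ and then compensate for its possible non-Noetherianity by routing through $\T(S)\otimes_R\hat R$ and Remark~3. Both variants work; the paper's is slightly more streamlined since it never leaves the Noetherian setting.

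For part (b) your proposal has a genuine gap. You correctly diagnose that localizing and invoking (a) is circular because $R_f$ is again non-local, and the induction you gesture at is neither set up nor carried out; as written, nothing is proved. The paper avoids this entirely with a short direct argument that does not reduce to (a) at all: purity of $R\to S$ already yields an $R$-module injection $H^i_I(R)\hookrightarrow H^i_{IS}(S)$ (Corollary~6.8 of \cite{HR}); since $\Ass_S H^i_{IS}(S)$ is finite, Remark~3 (via \cite{Ya}) gives that $\Ass_R H^i_{IS}(S)$ is finite, and then $\Ass_R H^i_I(R)\subseteq\Ass_R H^i_{IS}(S)$ finishes. No completion, no localization, and no local hypothesis on $R$ are needed. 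You in fact already invoke both ingredients (Remark~3 and the restriction-of-scalars identification of $H^i_{IS}(S)$ with $H^i_I(S)$) inside your argument for (a); the point you miss is that purity alone furnishes the submodule inclusion $H^i_I(R)\subseteq H^i_I(S)$, which makes (b) a two-line consequence.
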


\begin{proof}
(a) For pure extension $R\rightarrow S$, consider the extension $\hat{R}\rightarrow \hat{S}$, where $\hat{S}$ is completion of $S$ in $\m$-adic topology. From Corollary 6.13 of \cite{HR}, we find that $\hat{R}\rightarrow \hat{S}$ is also pure. From Exercise 9.5 in page 64 of \cite{Hu}, we find that extension $\hat{R}\rightarrow \hat{S}$ actually splits. For Lyubeznik functor $\T$, consider $\T(R)$. Since $\Ass_{S} \T(S)$ is a finite set, from faithfully flat ring extension $S\rightarrow \hat{S}$, $\Ass_{\hat{S}} \T(\hat{S})$ is also finite set, see Corollary 1. Since, extension $\hat{R}\rightarrow \hat{S}$ actually splits, using Theorem 1.1 of \cite{Nu} we have the finiteness of $\Ass_{\hat{R}} \T(\hat{R})$. Now, again by Corollary 1, from the result of faithfully flat base change we observe that $\Ass_{R} \T(R)$ is a finite set. Thus we conclude.

Second assertion is immediate.

(b) For pure extension $R\rightarrow S$, using corollary 6.8 of \cite{HR}, $H^i_{I}(R)$ can be thought as $R$-submodule of $H^i_{IS}(S)$. Here $S$ is also an $R$ module, we can think $H^i_{IS}(S)$ as $H^i_{I}(S)$ due to base change. Since $\Ass_{S} H^i_{IS}(S)$ is a finite set, from above Remark $\Ass_{R} H^i_I(S)$ is also a finite set. Since $H^i_{I}(R)\subset H^i_{I}(S)$ as an $R$-submodule, we  get that $\Ass_{R} H^i_I(R)$ is also finite.  
\end{proof}

The result of (b) of Theorem 4.1 is known and it is already obtained in (\cite{Nu}, Proposition 2.2) and the proof of Theorem 4.1 (b) is similar to that of (\cite{Nu}, Proposition 2.2), except the use of Corollary 1.7 of \cite{Ya}, as mentioned in Remark 3. 

There are immediate applications of Proposition 4.1, which are given in the following corollaries and in remark 5. It is to be noted that Corollary 4 and Corollary 5 are known (\cite{Nu}). 

\begin{cor}
Let $G$ be a linearly reductive affine linear algebraic group over a field $k$ of arbitrary characteristic acting $k$ rationally on a regular Noetherian $k$-algebra $S$. Then for ring of invariant $S^G$, for every $i\geq 0$, $\Ass_{S^G}H^i_I(S^G)$ is finite for every ideal $I\subset S^G$.
\end{cor}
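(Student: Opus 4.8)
The plan is to realize the invariant inclusion $S^G\hookrightarrow S$ as a pure ring extension and then feed it into Theorem 4.1(b), with $S^G$ playing the role of the base ring and $S$ the role of the extension ring. This reduces the whole problem to the finiteness of $\Ass_S H^i_J(S)$ for the regular ring $S$, which is already available in the literature. Thus the argument splits into two essentially independent inputs: purity of the invariant inclusion, and the known Lyubeznik-conjecture cases for regular rings.

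First I would verify purity. Since $G$ is linearly reductive and acts $k$-rationally on $S$, the `Main Theorem' of \cite{HR} produces a Reynolds operator for the extension $S^G\to S$; hence, exactly as recorded in Example 1(c), $S^G\to S$ is a pure extension (indeed $S^G$ is a direct summand of $S$ as an $S^G$-module). Along the way I must also confirm that $S^G$ is Noetherian, so that Theorem 4.1(b) applies: because $S^G$ is a direct summand of the Noetherian ring $S$, one has $IS\cap S^G=I$ for every ideal $I\subset S^G$, so an ascending chain of ideals of $S^G$ is carried order-preservingly and injectively to an ascending chain of ideals of $S$, which stabilizes; therefore $S^G$ is Noetherian.

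Next I would supply the finiteness hypothesis demanded by Theorem 4.1(b), namely that $\Ass_S H^i_J(S)$ is finite for every ideal $J\subset S$ and every $i\geq 0$. This is precisely the Lyubeznik conjecture for the regular ring $S$, which is a theorem in the present equicharacteristic setting: in positive characteristic it is the result of Huneke--Sharp \cite{HS} for regular rings of prime characteristic, and in characteristic zero it is the result of Lyubeznik \cite{Ly1} for regular rings containing a field of characteristic zero (both listed in the introduction). Since $S$ contains the field $k$ of arbitrary characteristic, exactly one of these two cases always applies, and there is no mixed-characteristic issue to worry about. With both ingredients in place the conclusion is immediate: the pure extension $S^G\to S$ together with the finiteness of $\Ass_S H^i_J(S)$ for all $J$ and $i$ yields, by Theorem 4.1(b), the finiteness of $\Ass_{S^G}H^i_I(S^G)$ for every ideal $I\subset S^G$ and every $i\geq 0$.

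The main obstacle I anticipate is not the descent step, which is a clean invocation of Theorem 4.1(b), but pinning down the exact generality in which $\Ass_S H^i_J(S)$ is known to be finite for the regular ring $S$. The cited theorems are stated most cleanly for affine or local regular rings (and \cite{HS} already covers all regular rings of prime characteristic), so if $S$ is a genuinely more general regular Noetherian $k$-algebra in characteristic zero, one may first need to reduce to the affine or local case—for instance by localizing and applying a descent argument in the spirit of Theorem 3.3—before invoking \cite{Ly1}. In the standard invariant-theoretic situation, where $S$ is a regular affine $k$-algebra, this difficulty does not arise and the two inputs combine directly.
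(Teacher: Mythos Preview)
Your proposal is correct and follows essentially the same approach as the paper, which treats this corollary as an immediate application of Theorem~4.1 (the paper gives no separate proof, noting only that the result is already known from \cite{Nu}): purity of $S^G\to S$ comes from the Reynolds operator as in Example~1(c), and the finiteness input for the regular ring $S$ comes from \cite{HS} and \cite{Ly1}. Your caveat about the precise generality of \cite{Ly1} in characteristic zero is a legitimate subtlety that the paper itself does not address.
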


%\begin{proof}
%Since $G$ is linearly reductive, we have $S^G\rightarrow S$ as a pure extension. Now the result follows from above proposition and finiteness of associated primes of the local cohomologies of regular rings containing fields, see \cite{HS}, \cite{Ly1} and \cite{Ly2}.
%\end{proof}

\begin{cor}
Let $K$ be a field and let $R$ be a regular domain containing $K$. Let $G$ be a finite subgroup of the group of automorphisms of $R$ and assume that $|G|$ is invertible in $K$. Let $R^G$ be the ring of invariants of $G$. Let $I$ be an ideal in $R^G$. Then for all $i\geq 0$, if $R$ has the property that $\Ass_R H^i_J (R)$ is a finite set for all ideals $J$ of $R$ and for all $i\geq 0$, then $R^G$ has similar property.\newline 
In particular if $K$ has characteristic $p > 0$ or if $R$ is a finitely generated K-algebra then the number of associated primes of $H^i_I (R^G)$ is finite.
\end{cor}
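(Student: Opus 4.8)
The plan is to realize $R^{G}\hookrightarrow R$ as a pure extension and then feed it into Theorem 4.1(b), with $R^{G}$ playing the role of the base ring and $R$ the role of the extension ring. First I would produce the Reynolds operator $\rho\colon R\to R^{G}$ defined by $\rho(x)=\frac{1}{|G|}\sum_{g\in G}g(x)$, which makes sense precisely because $|G|$ is invertible in $K$ and hence in $R$. This $\rho$ is $R^{G}$-linear and restricts to the identity on $R^{G}$, so it is a retraction of the inclusion $R^{G}\hookrightarrow R$; consequently $R^{G}$ is a direct summand of $R$ as an $R^{G}$-module, the inclusion splits, and therefore $R^{G}\to R$ is pure. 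This is exactly the situation recorded in the example of pure extensions arising from a Reynolds operator for a finite group whose order is invertible.

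Before invoking Theorem 4.1 I would check that $R^{G}$ is Noetherian, since that theorem is stated for Noetherian rings while $R^{G}$ is not assumed Noetherian a priori. Purity implies cyclic purity, and cyclic purity gives $JR\cap R^{G}=J$ for every ideal $J\subset R^{G}$; hence $J\mapsto JR$ is an inclusion-preserving injection from the ideals of $R^{G}$ into the ideals of $R$. As $R$ is Noetherian, the ascending chain condition transfers along this injection, so $R^{G}$ is Noetherian. Now Theorem 4.1(b), applied to the pure extension $R^{G}\to R$ (I use part (b) rather than (a) since $R$, and hence $R^{G}$, need not be local), yields: if $\Ass_{R}H^{i}_{J}(R)$ is finite for every ideal $J\subset R$ and every $i\ge 0$, then $\Ass_{R^{G}}H^{i}_{I}(R^{G})$ is finite for every ideal $I\subset R^{G}$ and every $i\ge 0$. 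This is precisely the conditional assertion.

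For the \emph{in particular} clause I would simply verify the finiteness hypothesis on the regular domain $R$ in each of the two listed cases. If $K$ has characteristic $p>0$, then $R$ is a regular ring of prime characteristic, so $\Ass_{R}H^{i}_{J}(R)$ is finite for all $i$ and all $J$ by \cite{HS}. If instead $R$ is a finitely generated $K$-algebra, then $R$ is a regular affine $K$-algebra: in characteristic zero the finiteness is \cite{Ly1}, and in positive characteristic it is again \cite{HS}. In either case the hypothesis of the first part is satisfied, and the conclusion $\Ass_{R^{G}}H^{i}_{I}(R^{G})$ finite follows. I do not expect a serious obstacle here, since the real content is that invertibility of $|G|$ forces splitting and hence purity, after which the machinery of Theorem 4.1(b) does the work; the step most easily overlooked, and the one I would be careful to justify, is the Noetherianity of $R^{G}$, which is needed for Theorem 4.1 to apply at all.
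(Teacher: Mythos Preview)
Your proposal is correct and follows essentially the same route the paper intends: the paper states this corollary as an immediate application of Theorem~4.1 via Example~1(c)(ii) (Reynolds operator for a finite group with $|G|$ invertible) together with \cite{HS} and \cite{Ly1} for the ``in particular'' clause, and gives no further details. Your explicit verification that $R^{G}$ is Noetherian, using cyclic purity to embed the ideal lattice of $R^{G}$ into that of $R$, is a careful addition the paper omits but which is indeed needed to invoke Theorem~4.1(b).
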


%\begin{proof}
%Since $|G|$ is invertible in $K$, we have the Reynolds operator $R\rightarrow R^G$ which splits the inclusion $R^G\rightarrow R$. This implies $R^G\rightarrow R$ is pure. Now the result follows from above proposition and finiteness of associated primes of the local cohomologies of regular rings containing fields, see \cite{HS}, \cite{Ly1} and \cite{Ly2}.
%\end{proof}

\begin{cor}
Let $S$ be normal domain and $S^G$ be cyclically pure. For every $i\geq 0$, if $\Ass_{S} H^i_J(S)$ is a finite set for every ideal $J\subset S$, then for every $i\geq 0$, $\Ass_{S^G} H^i_{I}(S^G)$ is also finite set for every ideal $I\subset R$.  
\end{cor}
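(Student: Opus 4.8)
The plan is to reduce the cyclically pure hypothesis to purity by exploiting the structure of the invariant ring, and then to quote Theorem 4.1(b) verbatim. The decisive observation is that the ring of invariants $S^G$, being the fixed subring of a normal domain, is itself a normal domain; once this is in hand, Remark 2 upgrades cyclic purity to purity at no cost, and the corollary falls out immediately.

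First I would verify that $S^G$ is a normal domain. It is a domain, being a subring of the domain $S$. For integral closedness, let $x$ lie in the field of fractions of $S^G$ and be integral over $S^G$. Writing $x=a/b$ with $a,b\in S^G$ and extending the $G$-action to the fraction field of $S$, each such $x$ is fixed by $G$. Moreover $x$ is integral over $S$ (since $S^G\subset S$) and lies in the fraction field of $S$, so normality of $S$ forces $x\in S$. Being simultaneously $G$-fixed and an element of $S$, we conclude $x\in S^G$, so $S^G$ is integrally closed in its own fraction field, i.e. it is a normal domain.

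Next I would invoke Remark 2. By part (b) a normal domain is approximately Gorenstein, so $S^G$ is approximately Gorenstein; and for approximately Gorenstein rings Remark 2 records that cyclic purity and purity coincide. Hence the cyclically pure extension $S^G\rightarrow S$ is in fact \emph{pure}. With this, the hypotheses of Theorem 4.1(b) are met: $S^G\rightarrow S$ is a pure extension of Noetherian rings (Noetherianity of $S^G$ being covered by the standing assumption), and $\Ass_S H^i_J(S)$ is finite for every ideal $J\subset S$ and every $i\geq 0$. Applying Theorem 4.1(b) with $S^G$ in the role of the base ring yields that $\Ass_{S^G} H^i_I(S^G)$ is finite for every ideal $I\subset S^G$ and every $i\geq 0$, which is exactly the assertion.

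The only genuine content is the normality of $S^G$, which is precisely what makes the approximately Gorenstein condition available; everything after it is a mechanical chain, Remark 2(b) $\to$ Remark 2 $\to$ Theorem 4.1(b). The main point to handle with care is simply the reading of the hypothesis ``$S^G$ is cyclically pure'' as the statement that the extension $S^G\rightarrow S$ is cyclically pure, since that framing is what lets Remark 2 and Theorem 4.1(b) apply.
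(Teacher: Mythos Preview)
Your proof is correct and is precisely the argument the paper has in mind. The paper presents this corollary as one of the ``immediate applications'' of Theorem~4.1 (stated without an explicit proof), and your route---showing $S^G$ inherits normality from $S$, invoking Remark~2(b) to get approximate Gorensteinness of $S^G$, upgrading cyclic purity to purity via Remark~2, then applying Theorem~4.1(b)---is exactly the intended unpacking.
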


%\begin{proof}
%It is well known that $S^G$ is normal for normal $S$ (see \cite{BH} for example). Since by Remark 1 (b) above, we find that in this case $S^G$ is also pure, using above proposition we conclude.
%\end{proof}

\begin{rem}
Proposition 4.1 implies that certain class of Cohen-Macaulay rings which arise in the following ways enjoy the finiteness property of associated primes of the local cohomologies.
 
(a) From the `Main Theorem' of \cite{HR} we find that $S^G$s of the above corollary are Cohen-Macaulay. The Cohen-Macaulay rings which arise in this fashion, we have finiteness of $\Ass_{S^G}H^i_I(S^G)$ for every $i\geq 0$ and for every ideal $I\subset S^G$. 

(b) Observe the following theorem from \cite{HR}: If $S$ is a regular Noetherian ring of characteristic $p>0$,
and $R$ is a pure subring of $S$, then R is Cohen-Macaulay. The Cohen-Macaulay rings which arise in this fashion, we also have finiteness of $\Ass_{R}H^i_I(R)$ for every $i\geq 0$ and for every ideal $I\subset R$. 

In the above situation, when $R$ is a cyclically pure subring of $S$, see Remark 6.
\end{rem}

We need the following lemma to prove our main results.

\begin{lemma}
Let $R\rightarrow S$ be a cyclically pure ring extension of Noetherian rings, where $R$ is a local ring with maximal ideal $\m$. Then $\hat{R}\rightarrow \bar{S}= S\otimes \hat{R}$ is also cyclically pure.
\end{lemma}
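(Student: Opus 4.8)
The plan is to verify the defining condition of cyclic purity for $\hat{R}\rightarrow\bar{S}$ directly: for every ideal $\hat{I}\subset\hat{R}$ one must show that $\hat{R}/\hat{I}\rightarrow\bar{S}/\hat{I}\bar{S}$ is injective, equivalently that $\hat{I}\bar{S}\cap\hat{R}=\hat{I}$. Since $\bar{S}=S\otimes_R\hat{R}$, a routine base-change computation gives $\bar{S}/\hat{I}\bar{S}=\bar{S}\otimes_{\hat{R}}(\hat{R}/\hat{I})=S\otimes_R(\hat{R}/\hat{I})$, so the map in question is the natural map $\hat{R}/\hat{I}\rightarrow S\otimes_R(\hat{R}/\hat{I})$. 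I would first treat ideals extended from $R$ and then reduce the general case to this one.

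First I would dispose of the extended ideals. For an ideal $I\subset R$, the same computation yields $\hat{R}/I\hat{R}=(R/I)\otimes_R\hat{R}$ and $\bar{S}/I\bar{S}=(S/IS)\otimes_R\hat{R}$, and the natural map between them is precisely $(R/I\rightarrow S/IS)\otimes_R\hat{R}$. Cyclic purity of $R\rightarrow S$ makes $R/I\rightarrow S/IS$ injective, and since $\hat{R}$ is flat (indeed faithfully flat) over $R$, tensoring preserves this injectivity. Hence $I\bar{S}\cap\hat{R}=I\hat{R}$ for every ideal $I$ of $R$.

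The heart of the argument is to approximate an arbitrary ideal $\hat{I}$ of $\hat{R}$ by extended ones. Writing $\hat{\m}=\m\hat{R}$, we have $\hat{\m}^n=\m^n\hat{R}$ and $\hat{R}/\hat{\m}^n=R/\m^n$, so the image of $\hat{I}$ in $R/\m^n$ lifts to an ideal $J_n\subset R$ with $\m^n\subset J_n$, and one checks that $\hat{I}+\hat{\m}^n=J_n\hat{R}$ is extended from $R$. By the previous step, $J_n\bar{S}\cap\hat{R}=J_n\hat{R}$. Because $\hat{I}\subset J_n\hat{R}$ we get $\hat{I}\bar{S}\subset J_n\bar{S}$, and therefore $\hat{I}\bar{S}\cap\hat{R}\subset\bigcap_n(J_n\bar{S}\cap\hat{R})=\bigcap_n J_n\hat{R}=\bigcap_n(\hat{I}+\hat{\m}^n)$. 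Since $\hat{R}/\hat{I}$ is a Noetherian local ring, Krull's intersection theorem gives $\bigcap_n(\hat{I}+\hat{\m}^n)=\hat{I}$, which forces $\hat{I}\bar{S}\cap\hat{R}=\hat{I}$, as desired. I expect the main obstacle to be exactly this reduction: an arbitrary ideal of the completion need not be extended from $R$, so the key technical point is the identity $\hat{I}+\hat{\m}^n=J_n\hat{R}$, which turns the finite-level quotients into extended ideals and lets the flat base change together with Krull's theorem finish the proof.
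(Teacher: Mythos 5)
Your proof is correct and follows essentially the same route as the paper's: both handle ideals extended from $R$ by tensoring the injection $R/I\rightarrow S/IS$ with the flat map $R\rightarrow\hat{R}$, and both reduce an arbitrary ideal of $\hat{R}$ to such ideals via the Krull intersection theorem (the paper phrases this as every $\hat{\m}$-primary ideal being extended and every ideal being an intersection of $\m$-primary ideals, which is exactly your identity $\hat{I}=\bigcap_n(\hat{I}+\hat{\m}^n)$ with $\hat{I}+\hat{\m}^n=J_n\hat{R}$). No gaps.
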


\begin{proof}
We observe the following fact: Consider a ring homomorphism $A\rightarrow B$ of Noetherian rings where $(A,\m)$ is a Noetherian local ring with maximal ideal $\m$. Let for every $\m$-primary ideal $\q\subset A$, $\q B\cap A=\q$, then for every ideal $I$ of $A$, $IB\cap A=I$ i.e. ring homomorphism is cyclically pure. 

To see this we mention the following fact: For any ideal $I\subset A$, $I$ can be written as an arbitrary intersection of $\m$-primary ideals. This is due to Krull-Intersection Theorem. As for example one can think $I=\bigcap^{\infty}_{n=1}(I+{\m}^n)$. Let $I=\bigcap_{i\in \Omega} {\q}_i$ where $\Omega$ is an arbitrary index set. Thus $I\subset IB\cap A\subset (\bigcap_{i\in \Omega} {\q}_i)B\cap A\subset (\bigcap_{i\in \Omega} ({\q}_iB))\cap A=\bigcap_{i\in \Omega} ({\q}_iB\cap A)=\bigcap_{i\in \Omega} {\q}_i=I$. This implies $IB\cap A=I$. 

Since $R\rightarrow S$ is cyclically pure, for every $\m$-primary ideal ${\q}_i$ we have ${\q}_iS\cap R={\q}_i$. Let $\hat{R}$ be the completion of $R$ in $m$-adic topology. Consider the following commutative diagram.  
\[
  \xymatrix
{
  & \hat{R} 
    \ar@{->}[r]
 & S\otimes\hat{R}=\bar{S}  
 \\
  & R
	\ar@{->}[u]
\ar@{->}[r]
 & S
  \ar@{->}[u]      
 }
\]
Going to the ring $\hat{R}$ with a maximal ideal $\hat{\m}$ via faithfully flat ring extension $R\rightarrow \hat{R}$ we find that every $\hat{\m}$-primary ideal $\hat{\q}_i$ is of the form $\hat{\q}_i={\q}_i\hat{R}$. Consider the ring homomorphism $\hat{R}\rightarrow \bar{S}=S\otimes \hat{R}$. Here ${\q}_iS\cap R={\q}_i$ is equivalent to $R/{\q}_i \rightarrow S/{\q}_i S$ is injective. Tensoring with $\hat{R}$ the last map remains injective i.e. we get $\hat{R}/\hat{\q}_i \rightarrow \bar{S}/\hat{{\q}_i}\bar{S}$ is injective. Thus for every $\hat{\m}$-primary ideal we have $\hat{{\q}_i}\hat{S}\cap \hat{R}=\hat{{\q}_i}$. Thus from above paragraph of the proof we get $\hat{R}\rightarrow \bar{S}$ is also a cyclically pure ring extension.
\end{proof}

We recall that a Noetherian semilocal ring is analytically unramified if its completion (by its Jacobson radical) is reduced see \cite{CAofM}. Now, we state another main result of this section, regarding the behaviour of the finiteness of the set of associated primes of local cohomologies under cyclically pure base change. 

\begin{thm}
Consider a cyclically pure ring homomorphism $R\rightarrow S$ of Noetherian rings where $R$ is a analytically unramifed Noetherian local ring with maximal ideal $\m$.\newline 
(a) Then, for Lyubeznik functor $\T$, finiteness of $\Ass_S \T(S)$ implies the finiteness of $\Ass_R \T(R)$.\newline 
(b) In particular, for every $i\geq 0$, if $\Ass_{S} H^i_J(S)$ is a finite set for every ideal $J\subset S$, then for every $i\geq 0$, $\Ass_{R} H^i_{I}(R)$ is also finite set for every ideal $I\subset R$.  
\end{thm}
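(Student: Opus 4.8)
The plan is to deduce Theorem~4.3 from the already-settled pure case, Theorem~4.1(a), by first passing to the completion $\hat{R}$ of $R$, where the cyclically pure hypothesis can be upgraded to genuine purity. Concretely, the descent proceeds along the square of Lemma~4.2: from the finiteness of $\Ass_S\T(S)$ I would climb to the finiteness of $\Ass_{\bar{S}}\T(\bar{S})$, then cross the (now pure) extension $\hat{R}\to\bar{S}$ to obtain the finiteness of $\Ass_{\hat{R}}\T(\hat{R})$, and finally descend along the faithfully flat map $R\to\hat{R}$ to the finiteness of $\Ass_R\T(R)$.

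First I would apply Lemma~4.2: since $R\to S$ is cyclically pure with $(R,\m)$ local, the base change $\hat{R}\to\bar{S}=S\otimes_R\hat{R}$ is again cyclically pure. The reason for working over $\hat{R}$ rather than $R$ is exactly the analytically unramified hypothesis: by definition it forces $\hat{R}$ to be reduced, and a complete local (hence semilocal) reduced ring is approximately Gorenstein. Over such a ring cyclic purity and purity coincide (see \cite{Ho1} and the remark immediately following the definitions of pure and cyclically pure extensions), so the extension $\hat{R}\to\bar{S}$ is in fact \emph{pure}. This is the crux of the argument, and the point at which both hypotheses on $R$ (local, analytically unramified) are consumed.

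The two flat transfers are then routine chaining. Since $R\to\hat{R}$ is faithfully flat, so is its base change $S\to\bar{S}=S\otimes_R\hat{R}$; hence by (3) of Corollary~1 the finiteness of $\Ass_S\T(S)$ gives the finiteness of $\Ass_{\bar{S}}\T(\bar{S})$. Now $\hat{R}$ is a Noetherian local ring and $\hat{R}\to\bar{S}$ is a pure extension, so Theorem~4.1(a) applies and yields the finiteness of $\Ass_{\hat{R}}\T(\hat{R})$. Finally, a last application of (3) of Corollary~1 to the faithfully flat map $R\to\hat{R}$ descends this to the finiteness of $\Ass_R\T(R)$, proving (a). Part (b) is the special case $\T=H^i_I(-)$: applying (a) to this functor for each $i\geq 0$ and each ideal $I\subset R$ gives the assertion, since the hypothesis is posited over all ideals $J\subset S$ (in particular $J=IS$).

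The main obstacle I anticipate is precisely the upgrade from cyclic purity to purity. It cannot be carried out over $R$ itself, which need not be approximately Gorenstein; the device of first base-changing to $\hat{R}$ (Lemma~4.2) and then invoking that $\hat{R}$ is reduced (the analytically unramified hypothesis) is exactly what makes Hochster's approximately Gorenstein criterion available. One further point to verify, so that Theorem~4.1(a) applies verbatim, is that $\bar{S}=S\otimes_R\hat{R}$ is Noetherian; this is the only genuinely technical check, and everything else reduces to threading faithfully flat descent through the pure case.
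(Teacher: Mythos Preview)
Your argument is correct and follows the paper's proof almost verbatim: Lemma~4.2 to get $\hat{R}\to\bar{S}$ cyclically pure, the analytically unramified hypothesis plus Hochster's approximately Gorenstein criterion to upgrade to purity, and faithfully flat transfer via Corollary~1 on both ends. The only cosmetic difference is that the paper, rather than citing Theorem~4.1(a), goes one step further and observes that since $\hat{R}$ is already complete, the pure extension $\hat{R}\to\bar{S}$ actually \emph{splits} (Exercise~9.5 of \cite{Hu}), and then invokes Theorem~1.1 of \cite{Nu} directly; your route through Theorem~4.1(a) just re-runs that same splitting argument inside the black box. Your flagged technical point about the Noetherianity of $\bar{S}=S\otimes_R\hat{R}$ is a fair caveat, and it is equally implicit in the paper's own proof.
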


\begin{proof}
(a) From Lemma 4.2, we find that $\hat{R}\rightarrow \bar{S}$ is also cyclically pure. Since $\hat{R}$ is also reduced, using \cite{Ho1} or from above Remark 2, $\hat{R}\rightarrow \bar{S}$ is actually a pure extension. Moreover, from Exercise 9.5 in page 64 of \cite{Hu}, we find that extension $\hat{R}\rightarrow \bar{S}$ actually splits. For Lyubeznik functor $\T$, consider $\T(R)$. Since $\Ass_{S} \T(S)$ is a finite set, from faithfully flat ring extension $S\rightarrow \bar{S}$, $\Ass_{\bar{S}} \T(\bar{S})$ is also finite set, see Corollary 1. Since, extension $\hat{R}\rightarrow \bar{S}$ actually splits, using Theorem 1.1 of \cite{Nu} we have the finiteness of $\Ass_{\hat{R}} \T(\hat{R})$. Now, again by Corollary 1, from the result of faithfully flat base change we observe that $\Ass_{R} \T(R)$ is a finite set. Thus we conclude.

(b) This assertion is immediate from (a). %Or, one can prove it similar way to that of (a) above, using result of (b) of Proposition 4.1 and results of faithfully flat base change.
\end{proof}

We observe the following corollary.

\begin{cor}
Let $(S,\n)$ be a regular local ring which is either (a) of equicharacteristic or (b) unramified. Let $(R,\m)$ be its cyclically pure subring. Then for Lyubeznik functor $\T$, $\Ass_R \T(R)$ is a finite set.\newline 
In particular, for every $i\geq 0$, $\Ass_{R} H^i_{I}(R)$ is finite set for every ideal $I\subset R$. 
\end{cor}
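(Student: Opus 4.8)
The plan is to deduce Corollary 7 from Theorem 4.3 by verifying its two hypotheses for the extension $R \to S$: that $\Ass_S \T(S)$ is finite, and that the local ring $R$ is analytically unramified. Granting both, since $R \to S$ is cyclically pure and $R$ is analytically unramified, Theorem 4.3(a) yields the finiteness of $\Ass_R \T(R)$ at once, and the local cohomology statement is the special case recorded in Theorem 4.3(b).

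First I would dispose of the finiteness over $S$. Here $S$ is a regular local ring, and the two cases in the hypothesis are precisely the settings in which Lyubeznik's machinery applies: in the equicharacteristic case one invokes \cite{HS} when the characteristic is positive and \cite{Ly1} when it is zero, while in the unramified mixed-characteristic case one invokes \cite{Ly2}. In each of these cases the module $\T(S)$ has only finitely many associated primes, so $\Ass_S \T(S)$ is finite. (A reader interested only in local cohomology may replace $\T$ by $H^i_J$ throughout and cite the same references.)

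The substantive point, and the step I expect to be the main obstacle, is showing that $R$ is analytically unramified, i.e.\ that $\hat R$ is reduced; this is exactly the hypothesis through which Theorem 4.3 upgrades cyclic purity to purity and then to a split after completion. I would first note that $R \to S$ is automatically local: cyclic purity gives $\m S \cap R = \m \neq R$, so $\m S \subseteq \n$ and hence $\n \cap R = \m$. Since $S$ is regular local, its completion $\hat S$ is a complete regular local ring, hence an excellent normal domain and in particular reduced. The map $S \to \hat S$ is faithfully flat, therefore pure, therefore cyclically pure; composing with the cyclically pure $R \to S$ one checks directly that $I\hat S \cap R = (I\hat S \cap S) \cap R = IS \cap R = I$ for every ideal $I \subset R$ (using $I\hat S \cap S = IS$ from faithful flatness of $S \to \hat S$), so $R \to \hat S$ is again cyclically pure. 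Thus $R$ is realized as a cyclically pure subring of the reduced complete ring $\hat S$; in particular $R$ is a subring of a domain, hence a reduced domain, and by the Hochster--Roberts-type results recorded in Remark 5 and Remark 6 it is Cohen--Macaulay.

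It remains to promote ``$R$ reduced'' to ``$\hat R$ reduced''. The cleanest route is to produce an injection $\hat R \hookrightarrow \hat S$: since $R \to \hat S$ is local and $\hat S$ is complete, it extends uniquely to a local homomorphism $\psi \colon \hat R \to \hat S$, and if $\psi$ is injective then $\hat R$, being a subring of the reduced ring $\hat S$, is reduced. Establishing injectivity of $\psi$ is the delicate point: one wants to run the argument of Lemma 4.2 at the level of $\hat R$, using that every $\hat\m$-primary ideal of $\hat R$ is extended from an $\m$-primary ideal of $R$ and that cyclic purity of $R \to \hat S$, combined with the flatness of $R \to \hat R$, keeps the maps $\hat R/\hat\q \to \hat S/\hat\q\hat S$ injective. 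Here the Cohen--Macaulayness of $R$ (hence of $\hat R$, which is then unmixed) is what should rule out ``new'' minimal primes of $\hat R$ lying over $(0)$ that might carry nilpotents, and this is where I expect to have to work hardest. Once $\hat R$ is known to be reduced, Theorem 4.3 applies verbatim and the corollary follows.
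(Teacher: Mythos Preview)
Your overall strategy coincides with the paper's: verify the two hypotheses of Theorem~4.3 and apply it. The finiteness of $\Ass_S \T(S)$ is handled in the same way (the paper cites \cite{Ly1}, \cite{Ly2}, \cite{Ly3}; your choice of \cite{HS} in positive equicharacteristic is equally valid).

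For the analytic unramifiedness of $R$, the paper takes a shortcut you did not: it simply invokes Lemma~6.7 of \cite{AS}, which states that if $R\to S$ is cyclically pure between Noetherian local rings then so is $\hat R\to\hat S$ (completions in the respective maximal-adic topologies). Since $\hat S$ is a complete regular local ring and hence a domain, the resulting injection $\hat R\hookrightarrow\hat S$ forces $\hat R$ to be reduced, and Theorem~4.3 applies.

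Your hand-built route to the same injection is in fact already complete, and your anticipated difficulty is illusory. You correctly argue that $R\to\hat S$ is cyclically pure and that, for every $\hat\m$-primary ideal $\hat\q=\q\hat R$, the map $\hat R/\hat\q\to\hat S/\hat\q\hat S$ identifies with the injective map $R/\q\to\hat S/\q\hat S$. But this is exactly the hypothesis of the ``fact'' isolated at the beginning of the proof of Lemma~4.2: checking $\hat\q\hat S\cap\hat R=\hat\q$ on $\hat\m$-primary ideals yields it for \emph{all} ideals of $\hat R$, and taking the zero ideal gives $\ker(\hat R\to\hat S)\subset\bigcap_n\hat\m^{\,n}=0$. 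So $\hat R\hookrightarrow\hat S$ and $\hat R$ is reduced, with no appeal to Cohen--Macaulayness or unmixedness needed. In effect you have reproved the special case of Lemma~6.7 of \cite{AS} that the paper quotes; the Cohen--Macaulay detour can be deleted.
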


\begin{proof}
Let $\hat{S}$ and $\hat{R}$ be the completions of $S$ and $R$ in their respective $\n$-adic and $\m$-adic topology. Here $R\rightarrow S$ is cyclically pure. From Lemma 6.7 of \cite{AS} we find that $\hat{R}\rightarrow \hat{S}$ is also cyclically pure and hence injective. This suggests that the cyclically pure subring $R$ of $S$ is analytically unramified since $\hat{S}$ as well as $S$ are domain. From \cite{Ly1}, \cite{Ly2} and \cite{Ly3} we find that $\Ass_S \T(S)$ as well as $\Ass_{S} H^i_J(S)$ is a finite set, for every $i\geq 0$, for every ideal $J\subset S$. Thus using Theorem 4.3, we can conclude. 
\end{proof}

\begin{rem}
Theorem 6.8 of \cite{AS}, which is a generalization of the theorem of \cite{HR} (as mentioned in (b) of Remark 5), states that a cyclically pure subring of a regular local ring is Cohen-Macaulay. Thus Corollary 7 implies that a class of Cohen-Macaulay rings which arise as cyclically pure subrings of a regular local rings, enjoy the finiteness property of the set of associated primes of the local cohomologies.
\end{rem}

We conclude the section with following observation.

\begin{rem}
In Lemma 4.2, we show that for a cyclically pure ring homomorphism $R\rightarrow S$ of Noetherian rings where $R$ is a Noetherian local ring with maximal ideal $\m$, $\hat{R}\rightarrow \bar{S}= S\otimes\hat{R}$ is also cyclically pure. It is to be noted that $\bar{S}$ will be equal to $\hat{S}$, if $S$ is module finite over $R$, where $\hat{S}$ is completion of $S$ in $\m$-adic topology. In Lemma 6.7 of \cite{AS}, we also observe that for Noetherian local rings $(S,\n)$ and $(R,\m)$, if $R\rightarrow S$ is cyclically pure then so is $\hat{R}\rightarrow \hat{S}$. But, there $\hat{S}$ and $\hat{R}$ are the completions of $S$ and $R$ in their respective $\n$-adic and $\m$-adic topologies. 
\end{rem}
%%%%%%%%%%%%%%%%%%%%%%%%%%%%%%%%%%%%%%%%%%%%%%%%%%%%%%%%%%%%%%%%%%%%%%%%%%%%%%%%%%%%%%%%%%%%%%%%%%%%%%%%%%%%%%%%%%%%%%%%%%%%%%%%%%%%%%%%%%%%%%%%%%%%%%%%%%%%%%%%%%%%%%%%%%%%%%%%%%%%%%%%%%%%%%%%%%%%%%%%%%%%%%%%%%%%%%%%%%%%%%%%%%%%%%%%%%%%%%%%%%%%%%%%%%%%%%%%%%%%%%%%%%%%%%%%%
\section{appendix: behaviour of the bass numbers under pure and cyclically pure ring extensions}

In this section, we prove results regarding the behaviour of Bass numbers under pure and cyclically pure ring extensions (see Theorem 5.2 and Theorem 5.3). At first, we recall the definition of the Bass number: Let $T$ be a module over a ring $R$ and $\p$ be a prime ideal of $R$. We define the $j$-th Bass number $\mu_j(\p,T)$ at $\p$ to be $\mu_j(\p,T)= dim_{\kappa{(\p)}} \Ext_{R_{\p}}^j (\kappa{(\p)},T_{\p})$, where $\kappa{(\p)} = R_{\p} /{\p}R_{\p}$. 

We observe the following result regarding the Bass number.

\begin{prop}
Consider Noetherian local ring $(R,\m)$ and let $\hat{R}$ be the $\m$-adic completion of $R$. Then for every $R$-module $E$, all the Bass numbers of $E$ is finite if and only if all the Bass numbers of $E\otimes_{R} \hat{R}$ is finite.
\end{prop}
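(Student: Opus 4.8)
The plan is to reduce the entire statement to the base change formula for Bass numbers under the faithfully flat local homomorphism $R\rightarrow\hat{R}$. The formula I would invoke (due to Foxby, \emph{Injective modules under flat base change}, and valid for an arbitrary module) asserts that for every $P\in\Spec\hat{R}$ with $\p=P\cap R$ and every $n\geq 0$,
\[
\mu_n(P,E\otimes_R\hat{R})=\sum_{i+j=n}\mu_i(\p,E)\cdot\mu_j(\bar P,F_P),
\]
where $F_P=\hat{R}_P/\p\hat{R}_P$ is the localized formal fibre and $\bar P$ its maximal ideal (so that $F_P/\bar P=\kappa(P)$). The first thing I would record is that each $F_P$ is a Noetherian local ring, whence its self-Bass numbers $\mu_j(\bar P,F_P)=\dim_{\kappa(P)}\Ext_{F_P}^j(\kappa(P),F_P)$ are all finite: indeed $\Ext_{F_P}^j(\kappa(P),F_P)$ is a finitely generated $F_P$-module annihilated by $\bar P$, hence a finite-dimensional $\kappa(P)$-vector space. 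This finiteness of the fibre contributions is what makes the displayed identity usable in both directions.

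For the forward implication, suppose every Bass number $\mu_i(\p,E)$ of $E$ is finite. Fix $P\in\Spec\hat{R}$ and set $\p=P\cap R$. In the displayed identity each summand $\mu_i(\p,E)\cdot\mu_j(\bar P,F_P)$ is a product of two finite numbers, and the sum runs over the finitely many pairs with $i+j=n$; hence $\mu_n(P,E\otimes_R\hat{R})$ is finite for every $P$ and every $n$, which is precisely the finiteness of all Bass numbers of $E\otimes_R\hat{R}$.

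For the converse I would argue prime by prime. Fix $\p\in\Spec R$. Since $R\rightarrow\hat{R}$ is faithfully flat, the fibre $\hat{R}\otimes_R\kappa(\p)$ is nonzero and Noetherian, so it has a minimal prime; equivalently there is $P\in\Spec\hat{R}$ with $P\cap R=\p$ and $P$ minimal over $\p\hat{R}$. For such $P$ the fibre $F_P$ is Artinian local, hence of depth zero, so its socle is nonzero and $\mu_0(\bar P,F_P)=\dim_{\kappa(P)}\Ext_{F_P}^0(\kappa(P),F_P)\geq 1$. I then induct on $n$. Isolating the term $i=n$ gives
\[
\mu_n(P,E\otimes_R\hat{R})=\mu_n(\p,E)\cdot\mu_0(\bar P,F_P)+\sum_{i<n}\mu_i(\p,E)\cdot\mu_{n-i}(\bar P,F_P).
\]
By the induction hypothesis every $\mu_i(\p,E)$ with $i<n$ is finite, so the sum over $i<n$ is finite; as the left-hand side is finite by assumption and $\mu_0(\bar P,F_P)$ is a finite positive integer, the product $\mu_n(\p,E)\cdot\mu_0(\bar P,F_P)$ is finite, forcing $\mu_n(\p,E)$ to be finite. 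Since $\p$ was arbitrary, all Bass numbers of $E$ are finite.

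The step I expect to be the main obstacle is the justification of the base change formula itself for an \emph{arbitrary} (not necessarily finitely generated) module $E$: for finitely generated modules the convolution identity is classical, but here I must rely on the version valid for all modules, which in turn rests on the behaviour of the injective hulls $E_R(R/\p)$ under the flat base change $R\rightarrow\hat{R}$. A secondary point to get right in the converse is the existence, for each $\p$, of a prime $P$ over $\p$ whose fibre $F_P$ has depth zero; choosing $P$ minimal over $\p\hat{R}$ (a minimal prime of the formal fibre) is exactly what guarantees $\mu_0(\bar P,F_P)\neq 0$ and lets the inductive extraction close.
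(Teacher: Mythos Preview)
Your argument via Foxby's base change convolution is correct and handles every prime of $\hat R$ uniformly. The paper takes a far more elementary route: it reduces (via the localization built into the definition of Bass numbers) to the maximal ideal, and then exploits the special feature of completion that the closed fibre is the residue field itself, $R/\m=\hat R/\hat\m=k$. Since $R/\m$ is finitely presented, flat base change gives $\Ext^j_{\hat R}(k,E\otimes_R\hat R)\cong\Ext^j_R(k,E)\otimes_R\hat R=\Ext^j_R(k,E)$ (the last equality because this is already a $k$-vector space), so in fact $\mu_j(\m,E)=\mu_j(\hat\m,E\otimes_R\hat R)$ on the nose, with no convolution and no induction. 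Your Foxby approach recovers this as the degenerate case in which the fibre $F_P$ is a field and only $\mu_0(\bar P,F_P)=1$ survives; its advantage is that it treats the non-maximal primes of $\hat R$ directly, whereas the paper's reduction to the closed point is left implicit for those primes (and is not entirely trivial, since $\hat R_P$ is not in general the completion of $R_\p$). The trade-off is that you invoke a substantially heavier theorem---Foxby's decomposition of $E_R(R/\p)\otimes_R S$ for arbitrary modules---where the paper gets by with a one-line Ext computation at the closed point.
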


\begin{proof}
From definition of the Bass numbers, we only prove the proposition for maximal ideal $\m$ of $R$. Since $R\rightarrow \hat{R}$ is faithfully flat, we have an injection $Ext_{R}^j (R/\m,E)\rightarrow Ext_{\hat{R}}^j (\hat{R}/\m\hat{R},E\otimes_{R} \hat{R})$. Set $k=R/\m = \hat{R}/\m\hat{R}$. If $\mu_j(\m\hat{R},E\otimes_R \hat{R})$ is finite, then $Ext_{\hat{R}}^j (\hat{R}/\m\hat{R},E\otimes_{R} \hat{R})$ is also finite as an $R$-module. Since $Ext_{R}^j (R/\m,E)$ is an $R$-submodule of $Ext_{\hat{R}}^j (\hat{R}/\m\hat{R},E\otimes_{R} \hat{R})$, it is also finitely generated as $R$- module and we get the finiteness of $\mu_j(\m,E)$. Other implication is immediate since finite generated module remains finitely generated under base change. This proves the proposition.
\end{proof}

We state the first result of this section.

\begin{thm}
Let $(R,\m)$ and $(S,\n)$ be Noetherian local rings and $R\rightarrow S$ be a pure ring extension, such that $S$ be a finitely generated as $R$-module. Let $S$ be Cohen-Macaulay.\newline 
(a) Then for Lyubeznik functor $\T$, finiteness of all the Bass numbers of $\T(S)$ implies the finiteness of all the Bass numbers of $\T(R)$.\newline 
(b) In particular, for some ideal $I\subset R$, if all the Bass numbers of $H^i_{IS}(S)$ are finite, then all the Bass numbers of $H^i_{I}(R)$ are also finite.\newline
\end{thm}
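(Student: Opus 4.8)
The plan is to transplant the architecture of the proof of Theorem 4.1(a) from associated primes to Bass numbers, using a finite change of rings in place of Theorem 1.1 of \cite{Nu}. First I would reduce to the complete case. Since $S$ is module-finite over the local ring $R$, the ideal $\m S$ is $\n$-primary, so the $\m$-adic completion $\hat{S}$ coincides with the $\n$-adic one; it is a complete local ring, module-finite over the complete local ring $\hat{R}$, still Cohen--Macaulay, and $\hat{S}=S\otimes_R\hat{R}$ (cf.\ Remark 6). Because $R\to\hat{R}$ and $S\to\hat{S}$ are faithfully flat, the base-change identities $\T(\hat{R})=\T(R)\otimes_R\hat{R}$ and $\T(\hat{S})=\T(S)\otimes_S\hat{S}$ hold (Lemma 3.1 of \cite{Ly1}); combined with Proposition 5.1 this shows that the Bass numbers of $\T(R)$ are all finite iff those of $\T(\hat{R})$ are, and likewise for $S$. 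Hence it suffices to prove the implication for the finite extension $\hat{R}\to\hat{S}$.

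Next I would exploit purity. By Corollary 6.13 of \cite{HR} the extension $\hat{R}\to\hat{S}$ is again pure, and since $\hat{R}$ is complete it splits (Exercise 9.5 of \cite{Hu}), giving an $\hat{R}$-module decomposition $\hat{S}\cong\hat{R}\oplus C$. Exactly as in the proof of Theorem 1.1 of \cite{Nu}, the additivity of the functors composing $\T$ together with independence of the base for local cohomology propagates this decomposition through $\T$, exhibiting $\T(\hat{R})$ as an $\hat{R}$-module direct summand of $\T(\hat{S})$. Since the minimal injective resolution of a direct summand is a direct summand of the minimal injective resolution of the whole, Bass numbers are monotone under passage to summands, so $\mu_j(\p,\T(\hat{R}))\le\mu_j(\p,\T(\hat{S}))$ over $\hat{R}$ for all $j$ and all $\p$. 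It therefore remains to prove that the $\hat{S}$-module $M:=\T(\hat{S})$, which has finite Bass numbers over $\hat{S}$, also has finite Bass numbers over $\hat{R}$.

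This change of rings along the finite map $\hat{R}\to\hat{S}$ is the technical heart. Fix $\p\in\Spec\hat{R}$; after localizing, $\hat{S}_\p$ is finite over $\hat{R}_\p$ with maximal ideals $\q_1,\dots,\q_r$, the primes of $\hat{S}$ over $\p$. Resolving $\kappa(\p)$ by frees over $\hat{R}_\p$, applying $-\otimes_{\hat{R}_\p}\hat{S}_\p$ and then $\mathrm{Hom}_{\hat{S}_\p}(-,M_\p)$ yields the change-of-rings spectral sequence
$$E_2^{s,t}=\Ext_{\hat{S}_\p}^{s}(\Tor_t^{\hat{R}_\p}(\kappa(\p),\hat{S}_\p),M_\p)\Rightarrow\Ext_{\hat{R}_\p}^{s+t}(\kappa(\p),M_\p).$$
Each $\Tor_t^{\hat{R}_\p}(\kappa(\p),\hat{S}_\p)$ is a finitely generated $\hat{S}_\p$-module killed by $\p$, hence of finite length, so it has a finite filtration with quotients among the $\kappa(\q_k)$; consequently every $E_2^{s,t}$ is assembled from finitely many copies of $\Ext_{\hat{S}_{\q_k}}^{s}(\kappa(\q_k),M_{\q_k})$, of dimension $\mu_s(\q_k,M)<\infty$ over $\kappa(\q_k)$, and each $\kappa(\q_k)$ is finite over $\kappa(\p)$. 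Thus every $E_2^{s,t}$ is finite-dimensional over $\kappa(\p)$, and for fixed $n$ only the pairs with $s+t=n$ contribute; the abutment $\Ext_{\hat{R}_\p}^{n}(\kappa(\p),M_\p)$ is finite-dimensional, i.e.\ $\mu_n(\p,M)<\infty$ over $\hat{R}$. Letting $\p$ and $n$ vary gives finiteness of all Bass numbers of $\T(\hat{S})$ over $\hat{R}$, hence of $\T(\hat{R})$, and finally of $\T(R)$ by Proposition 5.1.

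I expect the spectral-sequence step to be the main obstacle: one must control Bass numbers at every prime of $\hat{R}$ rather than only at $\hat{\m}$, and organize the argument so that the finite length of the transition Tor-modules converts the assumed $\hat{S}$-finiteness into $\hat{R}$-finiteness. Module-finiteness of $S$ over $R$ is what drives everything here---it yields $\hat{S}=S\otimes_R\hat{R}$, the splitting, and the finiteness of the residue extensions $\kappa(\q_k)/\kappa(\p)$---while the Cohen--Macaulay hypothesis is the natural setting guaranteeing that the $S$-side Bass numbers are finite in the intended applications and, if one prefers, gives access to the same conclusion through local duality with the canonical module of $\hat{S}$. Part (b) is the special case of (a) with $\T$ the single functor $H^i$ with support in the relevant ideal, so that $\T(S)=H^i_{IS}(S)$ and $\T(R)=H^i_I(R)$, and needs no further argument.
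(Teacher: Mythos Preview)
Your proposal is correct and shares the paper's overall architecture: reduce to completions via Proposition~5.1, use Corollary~6.13 of \cite{HR} to preserve purity, and then Exercise~9.5 of \cite{Hu} to obtain an $\hat{R}$-linear splitting of $\hat{R}\to\hat{S}$. The point of departure is the final step. The paper simply invokes Theorem~1.2 of \cite{Nu}, which handles the split, module-finite, Cohen--Macaulay situation as a black box and transfers finiteness of Bass numbers from $\T(\hat{S})$ to $\T(\hat{R})$. You instead supply a self-contained argument: first the direct-summand inequality $\mu_j(\p,\T(\hat{R}))\le\mu_j(\p,\T(\hat{S}))$ over $\hat{R}$, and then the change-of-rings spectral sequence
\[
\Ext_{\hat{S}_\p}^{s}\bigl(\Tor_t^{\hat{R}_\p}(\kappa(\p),\hat{S}_\p),M_\p\bigr)\ \Longrightarrow\ \Ext_{\hat{R}_\p}^{s+t}(\kappa(\p),M_\p)
\]
to pass finite Bass numbers of $M=\T(\hat{S})$ from $\hat{S}$ down to $\hat{R}$. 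That step is sound: the $\Tor$ terms have finite length over $\hat{S}_\p/\p\hat{S}_\p$ precisely because $\hat{S}$ is module-finite over $\hat{R}$, and the residue-field extensions $\kappa(\q_k)/\kappa(\p)$ are finite for the same reason. A pleasant by-product is that your argument never uses the Cohen--Macaulay hypothesis on $S$; it is needed in the paper only because it is a hypothesis of the cited Theorem~1.2 of \cite{Nu}. So your route is slightly more general and independent of \cite{Nu}, at the cost of carrying out the spectral-sequence computation explicitly, while the paper's route is shorter but imports the Cohen--Macaulay assumption through the citation.
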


\begin{proof}
(a) For pure extension $R\rightarrow S$, consider the extension $\hat{R}\rightarrow \hat{S}$, where $\hat{S}$ is completion of $S$ in $\m$-adic topology. Since $S$ is finitely generated as an $R$-module, $\hat{S}$ is also completion of $S$ in $\n$-adic topology. From Corollary 6.13 of \cite{HR}, we find that $\hat{R}\rightarrow \hat{S}$ is also pure. From Exercise 9.5 in page 64 of \cite{Hu}, we find that extension $\hat{R}\rightarrow \hat{S}$ actually splits. For Lyubeznik functor $\T$, consider $\T(R)$. Since all the Bass number $\T(S)$ are finite, from faithfully flat ring extension $S\rightarrow \hat{S}$, all the Bass numbers of $\T(\hat{S})$ are also finite, see Proposition 5.1. Now, $\hat{S}$ is Cohen-Macaulay ring, which is also finitely  generated $\hat{R}$-module and moreover extension $\hat{R}\rightarrow \hat{S}$ actually splits. Thus, using Theorem 1.2 of \cite{Nu}, we have the finiteness of all the Bass numbers of $\T(\hat{R})$. Now, again by Proposition 5.1, from the result of faithfully flat base change we observe that all the Bass numbers of $\T(R)$ are finite. Thus we conclude.

(b) This assertion is immediate from (a).
\end{proof}

Now, we state the second main result of this section.

\begin{thm}
Consider a cyclically pure ring homomorphism $R\rightarrow S$ of Noetherian local rings where $(R,\m)$ is a analytically unramifed and $(S,\n)$ is Cohen-Macaulay. Let $S$ be a finitely generated $R$-module.\newline
(a) Then, for Lyubeznik functor $\T$, finiteness of all the Bass numbers of $\T(S)$ implies the finiteness of of all the Bass numbers of $\T(R)$.\newline 
(b) In particular, for some ideal $I\subset R$, if all the Bass numbers of $H^i_{IS}(S)$ are finite, then all the Bass numbers of $H^i_{I}(R)$ are also finite.
\end{thm}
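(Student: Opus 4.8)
The plan is to reduce to the split, module-finite, Cohen--Macaulay situation over the completion $\hat{R}$, exactly as in the proof of Theorem 5.2, but first upgrading the cyclically pure hypothesis to a pure (hence split) one by invoking Lemma 4.2 together with the analytically unramified assumption, in the same way this was done for associated primes in the proof of Theorem 4.3.

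First I would record that, since $S$ is a finitely generated $R$-module, its $\m$-adic completion agrees with $S\otimes_R\hat{R}=\bar{S}$, and this in turn coincides with the $\n$-adic completion $\hat{S}$ (cf.\ Remark 6). Thus the diagram of completions collapses to a single extension $\hat{R}\to\hat{S}$ with $\hat{S}=S\otimes_R\hat{R}$. Next, by Lemma 4.2 the induced map $\hat{R}\to\bar{S}=\hat{S}$ is again cyclically pure. Since $R$ is analytically unramified, $\hat{R}$ is reduced; a reduced complete local ring is approximately Gorenstein, so by Remark 2 cyclic purity forces purity, and $\hat{R}\to\hat{S}$ is a pure extension. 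Because $\hat{R}$ is complete local, Exercise 9.5 in page 64 of \cite{Hu} then shows that this pure extension actually splits.

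From here the remaining steps run parallel to Theorem 5.2. Faithful flatness of $S\to\hat{S}$ and Proposition 5.1 transfer finiteness of all the Bass numbers of $\T(S)$ to $\T(\hat{S})$. Since $\hat{S}$ is Cohen--Macaulay (being the completion of the Cohen--Macaulay local ring $S$), is a finitely generated $\hat{R}$-module, and the extension $\hat{R}\to\hat{S}$ splits, Theorem 1.2 of \cite{Nu} yields the finiteness of all the Bass numbers of $\T(\hat{R})$. A final application of Proposition 5.1, now through the faithfully flat base change $R\to\hat{R}$, descends this to the finiteness of all the Bass numbers of $\T(R)$. Part (b) is then the special case $\T=H^i_I$.

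The main obstacle I expect is essentially bookkeeping: checking that module-finiteness of $S$ makes all the relevant completions coincide, so that $\hat{S}=S\otimes_R\hat{R}$, and that $\hat{S}$ remains both Cohen--Macaulay and module-finite over $\hat{R}$, which is exactly what allows Theorem 1.2 of \cite{Nu} to apply verbatim. Once the cyclically pure hypothesis has been upgraded to a split extension over $\hat{R}$ via Lemma 4.2 and the analytically unramified condition, no genuinely new ingredient beyond the pure case of Theorem 5.2 is required.
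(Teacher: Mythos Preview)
Your proposal is correct and follows essentially the same route as the paper's own proof: pass to completions via Lemma 4.2, use module-finiteness to identify $\bar{S}$ with the $\n$-adic completion $\hat{S}$, upgrade cyclic purity to a split extension using the analytically unramified hypothesis and Remark 2 together with Exercise 9.5 of \cite{Hu}, then apply Proposition 5.1 and Theorem 1.2 of \cite{Nu} exactly as in Theorem 5.2. The only cosmetic difference is the order in which you record $\bar{S}=\hat{S}$ versus invoking Lemma 4.2.
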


\begin{proof}
(a) From Lemma 4.2, we find that $\hat{R}\rightarrow \bar{S}$ is also cyclically pure. Since $S$ is finitely generated $R$-module, $\bar{S}= \hat{S}$, where latter is $\n$-adic completion of $S$. Since $\hat{R}$ is also reduced, using \cite{Ho1} or from above Remark 2, $\hat{R}\rightarrow \hat{S}$ is actually a pure extension. Moreover, from Exercise 9.5 in page 64 of \cite{Hu}, we find that extension $\hat{R}\rightarrow \hat{S}$ actually splits. For Lyubeznik functor $\T$, consider $\T(R)$. Since all the Bass numbers of $\T(S)$ are finite, from faithfully flat ring extension $S\rightarrow \hat{S}$, all the Bass numbers of $\T(\hat{S})$ are also finite, see Proposition 5.1. Now, $\hat{S}$ is Cohen-Macaulay ring, which is also finitely  generated $\hat{R}$-module and moreover extension $\hat{R}\rightarrow \hat{S}$ actually splits. Thus, using Theorem 1.2 of \cite{Nu}, we have the finiteness of all the Bass numbers of $\T(\hat{R})$. Now, again by Proposition 5.1, from the result of faithfully flat base change we observe that all the Bass numbers of $\T(R)$ are finite. Thus we conclude.

(b) This assertion is immediate from (a). %Or, one can prove it similar way to that of (a) above, using result of (b) of Proposition 4.1 and results of faithfully flat base change.
\end{proof}

%%%%%%%%%%%%%%%%%%%%%%%%%%%%%%%%%%%%%%%%%%%%%%%%%%%%%%%%%%%%%%%%%%%%%%%%%%%%%%%%%%%%%%%%%%%%%%%%%%%%%%%%%%%%%%%%%%%%%%%%%%%%%%%%%%%%%%%%%%%%%%%%%%%%%%%%%%%%%%%%%%%%%%%%%%%%%%%%%%%%%%%%%%%%%%%%%%%%%%%%%%%%%%%%%%%%%%%%%%%%%%%%%%%%%%%%%%%%%%%%%%%%%%%%%%%%%%%%%%%%%%%%%%%%%%%%%
{\textbf{Acknowledgement:}}\newline

I would like to thank Luis N$\acute{u}$$\tilde{n}$ez-Betancourt for his valuable comments and suggestions for earlier version (arXiv: 1512.05672v1) of this paper. I also wish to thank Tony J. Puthenpurakal for his comments on the results of the previous version this paper. In this version an Appendix on `Behaviour of the Bass numbers under pure and cyclically pure Ring Extensions' is added. %Also, I would like to thank referee to improve the result of Theorem 3.3 from its earlier version.

\end{document}